\documentclass[a4paper]{amsart}
\usepackage{amsfonts}\usepackage{amssymb}\usepackage{amsmath}\usepackage{amsthm}
\usepackage{stackengine}
\usepackage[utf8]{inputenc}
\usepackage{soul}
\usepackage[T1]{fontenc}
\usepackage{latexsym}
\usepackage{graphicx}
\usepackage{layout}
\usepackage{physics}
\usepackage{subfigure}
\usepackage{mathtools}  
\mathtoolsset{showonlyrefs}  
\usepackage[shortlabels]{enumitem}
\usepackage{verbatim}
\usepackage{pgfplots}
\usetikzlibrary{arrows,shapes}
\pgfplotsset{compat=1.18}
\usepackage{comment}
\usepackage{scalerel}
\usepackage[colorlinks=true,urlcolor=blue,citecolor=red,linkcolor=blue,linktocpage,pdfpagelabels,bookmarksnumbered,bookmarksopen]{hyperref}
\usepackage{tikz}
\usepackage{tikz-3dplot}
\usetikzlibrary{hobby}
\newtheorem{theorem}{Theorem}[section]
\newtheorem{definition}[theorem]{Definition}

\newtheorem{proposition}[theorem]{Proposition}
\newtheorem{lemma}[theorem]{Lemma}
\newtheorem{corollary}[theorem]{Corollary}
\newtheorem{remark}[theorem]{Remark}

\usepackage{lineno}

\def\R{\mathbb{R}}

\def\S{\mathbb{S}}
\def\N{{\rm I\mskip -3.5mu N}}

\def\di12{\mathcal{D}^{1,2}(\R^n)}

\def\l{{\lambda}}

\def\0l{_{0,\l}}
\def\1l{_{1,\l}}
\def\2l{_{2,\l}}
\def\3l{_{3,\l}}
\def\4l{_{4,\l}}

\def\Om{\Omega}

\def\beq{\begin{equation}}
\def\eeq{\end{equation}}

\def\sideremark#1{\ifvmode\leavevmode\fi\vadjust{\vbox to0pt{\vss
 \hbox to 0pt{\hskip\hsize\hskip1em
 \vbox{\hsize2.1cm\tiny\raggedright\pretolerance10000
  \noindent #1\hfill}\hss}\vbox to15pt{\vfil}\vss}}}%

\newtheorem*{theorem*}{Theorem}

\begin{document}
\title[Lane--Emden Problems on Convex Domains of $\mathbb S^2$]{Geometric Properties of positive solutions of Lane--Emden problems on convex domains of $\S^2$}
\author[Grossi \& Provenzano \& Raom]{Massimo Grossi\ \&\ Luigi Provenzano \&\ Daniel Raom }

\subjclass[2020]{35B, 35B50, 35Q, 52A55, 58J05, 58J32}

\keywords{Semilinear elliptic equations, geodesically convex domain, power concavity}

\thanks{The first and third author acknowledge support of the INDAM-GNAMPA group. The second author acknowledges support of the INdAM GNSAGA group. The third author gratefully acknowledges the kind hospitality of the Dipartimento di Scienze di Base e Applicate per l’Ingegneria of Sapienza University of Rome during the preparation of the work here presented.}

\address{Massimo Grossi,  Dipartimento di Scienze di Base Applicate per l’Ingegneria, Universit\`a degli Studi di Roma \emph{La Sapienza} P.le A. Moro 5 - 00185 Roma, e-mail: {\sf massimo.grossi@uniroma1.it}.}
\address{Luigi Provenzano,  Dipartimento di Scienze di Base Applicate per l’Ingegneria, Universit\`a degli Studi di Roma \emph{La Sapienza} P.le A. Moro 5 - 00185 Roma, e-mail: {\sf luigi.provenzano@uniroma1.it}.}
\address{Daniel Raom Santiago Bezerra Costa Da Silva,  Dipartimento di Scienze di Base Applicate per l’Ingegneria, Universit\`a degli Studi di Roma \emph{La Sapienza} P.le A. Moro 5 - 00185 Roma, e-mail: {\sf danielraom.santiagobezerra@uniroma1.it}.}
\maketitle

\begin{abstract}
We study positive solutions of the Dirichlet problem  $-\Delta u = u^p$ in a uniformly convex domain $\Om \subset \S^2$, $u= 0$ on $\partial\Om.$ For $p=1$, we assume that the right-hand side is replaced by $\lambda_1 u$, where $\lambda_1$ is the first eigenvalue of $-\Delta$ on $\Om$ with zero Dirichlet boundary condition. We prove that for $0 \leq p < 1$ the unique positive solution $u$ is such that $u^{\frac{1-p}{2}}$ is strictly concave in $\Om$, while for $1 < p \leq 3$ every positive solution $u$ is such that $u^{\frac{1-p}{2}}$ is strictly convex in $\Om.$ For $p=0,$ our result gives the strict $1/2-$concavity of the torsion function in $\Om.$ For $p=1,$ a result due to Lee and Wang gives the strict log-concavity of the first eigenfunction in $\Om.$ As a consequence, for each $0 \leq p \leq 3,$ any positive solution has strictly convex superlevel sets and a unique nondegenerate maximum. 
 \end{abstract}

\section{Introduction and Statement of the Main Result}
Let $\S^2$ be the 2-sphere with its standard metric of constant curvature $1.$ Let $\Om \subset \S^2$ be a domain with smooth boundary and let $u \in C^\infty(\overline{\Om})$ be a solution of the following Dirichlet problem:

\begin{align} \tag{$P_p$} \label{BVP0}
    \begin{cases}
        -\Delta u  = u^p \quad &\text{in} \quad \Om \subset \S^2,\\ 
        \quad \; \, \, u> 0 \quad &\text{in} \quad \Om, \\
        \quad \; \, \, u = 0 \quad &\text{on} \quad \partial \Om,
\end{cases}
\end{align} 
\smallskip
where $-\Delta$ denotes the Laplace-Beltrami operator in $\S^2$ and $0 \leq p< 3.$ 

\smallskip
For $p=1,$ the problem is understood as the Dirichlet eigenvalue problem
    \begin{align} \tag{$P_1$} \label{BVP_eigen}
        \begin{cases}
            -\Delta u_1 = \lambda_1 u_1 \quad &\text{in} \; \; \Om, \\
             \quad \; \;  u_1 > 0 \quad &\text{in} \; \; \Om, \\
            \quad \; \;  u_1 = 0 \quad &\text{on} \; \; \partial\Om, \\
        \end{cases}
    \end{align}
where $\lambda_1$ is the first Dirichlet eigenvalue of $-\Delta$ in $\Om.$ 

\smallskip

For $p=0,$ we have the torsion problem
\begin{align} \tag{$P_0$} \label{BVP1}
    \begin{cases}
        -\Delta u  = 1 \quad &\text{in} \quad \Om \subset \S^2,\\ 
        \quad \; \, \, u> 0 \quad &\text{in} \quad \Om, \\
        \quad \; \, \, u = 0 \quad &\text{on} \quad \partial \Om.
\end{cases}
\end{align}
\smallskip

In this paper, we prove that for $0 \leq p \leq 3,$ any solution $u$ of \eqref{BVP0} has strictly convex superlevel sets provided $\Om \subset \S^2$ is a uniformly convex domain. The conclusion for a solution $u_1$ of problem \eqref{BVP_eigen} was already established in the literature in \cite{LEE-WANG}. We recall the definition of uniform convexity below.   \\

\begin{definition} \label{def_GCdomain}
We say that a smooth domain $\Om \subset \S^2$ is \emph{geodesically convex} if for every pair of points $p, q \in \overline \Om,$ $p \neq q,$ there exists a minimizing geodesic segment connecting $p$ to $q$ that is entirely contained in $\overline \Om.$ If this geodesic segment is also unique and entirely contained in $\Om,$ except possibly at $p$ and $q,$ we say that $\Om$ is \emph{strictly geodesically convex}. If, in addition, there exists a constant $\kappa_0$ such that
    \begin{equation} \label{uniform}
        \kappa_g  \geq \kappa_0 > 0,
    \end{equation}    
\end{definition}
where $\kappa_g$ is the geodesic curvature of $\partial \Om$ with respect to the outward unit normal vector field $\nu$ along $\partial \Om,$ then $\Om$ is said to be \emph{uniformly geodesically convex}.
\smallskip

We shall simply write \emph{strict convexity} in place of \emph{strict geodesic convexity}, and \emph{uniform convexity} in place of \emph{uniform geodesic convexity,} if there is no ambiguity.

\begin{definition}
We say that a smooth function $v : \Om \subset \S^2 \to \R$ is \emph{convex} in $\Om$ if for every $p, q \in \Om$ joined by a unit-speed minimizing geodesic $\gamma : [0,1] \to \Om,$ $s \mapsto \gamma(s)$ with $\gamma(0)=p$ and $\gamma(1)=q$, the composition $v \circ \gamma$ is a convex function in $s$, i.e.
\begin{equation*}
    \frac{d^2}{ds^2}\, (v \circ \gamma)(s)
    = \nabla^2_{\dot{\gamma}(s),\,\dot{\gamma}(s)} v
    \ge 0
    \quad \text{for all } s \in [0,1],
\end{equation*}
where $\nabla^2 v$ is the covariant Hessian matrix. If the above inequality is strict, $v$ is said to be \textit{strictly} convex. Moreover, $v$ is said to be (strictly) \emph{concave} if $-v$ is (strictly) convex.
\end{definition}

In the Euclidean setting, the problem of determining whether the level sets of a solution to a Dirichlet problem inherit the convexity of the underlying domain has been widely investigated in the literature. A first fundamental result in this direction is due to Makar-Limanov \cite{MAKAR-LIMANOV}, who proved that the unique solution of the torsion problem $-\Delta u=1$ in a bounded convex domain $\Om \subset \R^2,$  $u\big|_{\partial \Om} = 0,$ is such that  $\sqrt{u}$ is concave in $\Om.$ Later, Brascamp and Lieb \cite{BRASCAMP-LIEB} established that the first Dirichlet eigenfunction of the Laplacian in a bounded convex domain $\Om \subset \R^n$ is log-concave. In dimension two, Acker, Payne and Phillipin \cite{ACKER-PAYNE-PHILLIPIN} gave a proof of this fact based on the maximum principle. These results were subsequently complemented by Kennington \cite{KENNINGTON}, who studied the sublinear problem $-\Delta u = u^p$ in a bounded convex domain $\Om \subset \R^n,$ $u\big|_{\partial \Om} = 0,$ with $0\leq p<1,$ and proved the concavity of $u^{\frac{1-p}{2}}$; see also \cite{KEADY}. For $p=0,$ this last result generalizes the one of Makar-Limanov to $\R^n.$ For the superlinear problem $-\Delta u = u^p$ in a bounded convex domain $\Om \subset \R^2,$ $u\big|_{\partial \Om} = 0$, $p>1,$ it was later proved by Lin \cite{LIN} that the convexity of $u^{\frac{1-p}{2}}$ holds in $\Om.$ In all of the aforementioned cases, the solution possesses convex level sets. 

The concavity results recalled above can be sharpened to their strict counterparts. In the planar Euclidean case, the strict convexity of the level sets of the solution $u$ of a Dirichlet problem $-\Delta u = f(u)$ in a bounded convex domain $\Om \subset \R^2,$ $u\big|_{\partial \Om}=0$ is true for some special nonlinearities $f(s),$ as studied by Caffarelli and Friedman \cite{CAFFARELLI-FRIEDMAN}. In fact, the idea exploited in \cite{CAFFARELLI-FRIEDMAN} is to transform the solution $u>0$ by some strictly monotone function $g$ and study the definiteness of the Hessian $D^2 v$ in $\Om.$ One first proves this definiteness in two local regimes: in an Euclidean ball and in a tubular neighborhood of the boundary of a strictly convex domain. Then a deformation of this Euclidean ball into an arbitrary strictly convex domain, together with a constant rank theorem, propagate the positive (or negative) definiteness of $\nabla^2 v$ to the whole domain. Then both $u$ and $g(u)$ possess strictly convex superlevel sets (see e.g. \cite{CAFFARELLI-FRIEDMAN}, Corollary 3.2). The main technique devised in \cite{CAFFARELLI-FRIEDMAN}, which came to be known as a "structure theorem" or a "constant rank theorem", was then generalized to higher dimensions by Korevaar and Lewis in \cite{KOREVAAR-LEWIS}.  We should note, however, that there exist bounded convex domains $\Om \subset \R^2$ and $C^{\infty}$ functions $f:[0, + \infty) \to \R$ such that the problem $-\Delta u = f(u)$ in $\Om$ with zero Dirichlet boundary condition admits a positive solution whose superlevel sets are not convex, as established by Hamel, Nadirashvilii and Sire \cite{HAMEL-NADIRASHVILI-SIRE}. 
\medskip

In the spherical setting, an equally satisfactory theory is not yet available, at least not in the spirit of Caffarelli–Friedman \cite{CAFFARELLI-FRIEDMAN} and Korevaar–Lewis \cite{KOREVAAR-LEWIS}. Nevertheless, some positive results are known in special situations. For the first Dirichlet eigenfunction $u_1$ of $-\Delta$ in a strictly convex domain $\Om \subset \S^n,$ Lee and Wang proved in \cite{LEE-WANG} that $\log u_1$ is strictly concave (see Proposition \ref{log_eigen}). Their argument adapts the deformation method of Caffarelli and Friedman \cite{CAFFARELLI-FRIEDMAN} to the spherical case, based on the deformation of a geodesic ball into an arbitrary strictly convex domain. Although not explicitly stated in \cite{LEE-WANG}, the strict log-concavity of the first eigenfunction $u_1$ yields the strict convexity of its superlevel sets. More recently, it has been shown by Khan, Saha and Tuerkoen in \cite{KHAN} that in a convex domain $\Om \subset \S^2$ having a circumradius of at most $2 \arctan 1/5,$ the torsion function in $\Om$ (the solution of problem \eqref{BVP1}) has convex superlevel sets. This shows that already for the torsion problem, establishing the convexity of the level sets of a solution to an elliptic problem in a spherical setting is far from being trivial.   \\

\subsection{Statement of the main result}

Regarding problem \eqref{BVP0}, a classical solution $u \in C^2({\Om}) \cap C(\overline{\Om})$ exists for each $p \geq 0,$ and by the regularity of the boundary, $u \in C^\infty(\overline{\Om})$ by a standard bootstrap argument.  

In the particular case where $\Om \subset \S^2$ is a geodesic ball of radius $0<R < \pi/2,$ it is known that for the Dirichlet problem $-\Delta u = f(u)$ in $\Om,$ $u\big|_{\partial \Om} = 0$ where $f$ is of class $C^1,$ any solution $u \in C^2(\overline{\Om})$ is radially symmetric and strictly decreasing in the radial variable. This fact was shown by Kumaresan and Prajapat in \cite{KUMARESAN-PRAJAPAT} by adapting the classical moving planes technique of Gidas, Ni and Nirenberg \cite{GIDAS-NI-NIRENBERG} to the spherical case. The nonlinearities $f(s) = s^p$ with $p \neq 1$ and $f(s) = \lambda_1 s$ fall under this setting, hence the level sets of the solution of \eqref{BVP0} are concentric geodesic balls.

In the case where $\Om \subset \S^2$ is an arbitrary uniformly convex domain, to the best of the authors' knowledge there are no general results in the literature concerning the convexity of the level sets of a solution to the Dirichlet problem \eqref{BVP0}. In this work we provide a positive answer for $0 \leq p \leq 3.$ 

\medskip

\begin{theorem} \label{power}
    Let $\Om \subset \S^2$ be a \emph{uniformly convex} domain with smooth boundary and let $u: \overline{\Om}\to \R$ be a solution of the Dirichlet problem \eqref{BVP0}. Then 
    \begin{enumerate}[label=(\alph*)]
        \item (\textit{Sublinear problem}) $u^{\frac{1-p}{2}}$  is strictly concave for $0 \leq p < 1;$ \label{sublinear} 
       	\item (\textit{Eigenvalue problem}) $\log u$ is strictly concave for $p=1$ (Lee--Wang \cite{LEE-WANG}); \label{eigenvalue}
       \item (\textit{Superlinear problem}) $\displaystyle u^{\frac{1-p}{2}}$ is strictly convex for $1<p \leq 3;$ \label{superlinear}
    \end{enumerate}
\end{theorem}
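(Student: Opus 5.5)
Part \ref{eigenvalue} is exactly the Lee--Wang result recalled in Proposition \ref{log_eigen}, so only \ref{sublinear} and \ref{superlinear} need proof. For both I would run the Caffarelli--Friedman deformation scheme adapted to $\S^2$. Set $v=u^{\frac{1-p}{2}}$, or more conveniently $w=-v$ for $p<1$ and $w=v$ for $p>1$, so that in both cases the goal is strict convexity of $w$.

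\textbf{Step 1: the equation for $w$.} Writing $u=v^{\frac{2}{1-p}}$ and $\beta=\frac{2}{1-p}$, one computes $\Delta u = \beta v^{\beta-1}\bigl(\Delta v+(\beta-1)|\nabla v|^2/v\bigr)$. Hence $v$ satisfies
\[
v\Delta v=-\tfrac{1+p}{1-p}|\nabla v|^2-\tfrac{1-p}{2}.
\]
Differentiating twice and using the Ricci identity on $\S^2$ (curvature $1$), I would derive an elliptic inequality for the smallest eigenvalue of $\nabla^2 w$. Equivalently, following Korevaar--Lewis, I would show that the function $\phi=\det\nabla^2 w$, or the minimal eigenvalue of $\nabla^2 w$ in the spirit of Acker--Payne--Philippin, satisfies $L\phi\le C|\nabla\phi|+C\phi$ wherever $\nabla^2 w\ge0$. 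This is a constant rank theorem: if $\nabla^2 w\ge 0$ in $\Om$, then its rank is constant. The curvature terms contribute multiples of $|\nabla v|^2$ and of $v$. The main obstacle is to check that these terms have the favourable sign. I expect the restriction $p\le3$ to appear precisely here: the exponent $\frac{1-p}{2}\ge -1$ must keep the coefficient of the spherical curvature correction nonnegative. So I would first carry out the computation in a principal frame at a point where $w_{11}=0$ and $w_{12}=0$, and read off the sign condition.

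\textbf{Step 2: boundary behaviour.} Near $\partial\Om$ I would use Hopf's lemma, which gives $u\sim d\,|\partial_\nu u|$ with $d$ the distance to the boundary. From this, $v$ behaves like $c\,d^{\frac{1-p}{2}}$. The uniform convexity $\kappa_g\ge\kappa_0$ makes $d$ strictly concave near the boundary on the sphere, since $\nabla^2 d\le -\kappa_g$ tangentially. Combined with the blow-up or vanishing of the normal second derivative, this yields $\nabla^2 w>0$ in a collar $\{d<\delta\}$.

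\textbf{Step 3: continuity argument.} Next I would deform a small geodesic ball $B$ into $\Om$ through uniformly convex domains $\Om_t$, $t\in[0,1]$, for instance by a Minkowski-type interpolation of support data on the sphere, keeping $\kappa_g\ge\kappa_0'$ uniformly. On the ball, the result follows from radial symmetry (Kumaresan--Prajapat) together with an ODE computation: a direct check shows that $w$ is radially strictly convex and that $w'/\tan r>0$. For $p\ne1$ with $p<3$ one also needs nondegeneracy of the solutions along the path, so that they vary continuously; this holds for $p<1$ by uniqueness. For $1<p\le3$ I would instead use a priori bounds (blow-up analysis, Gidas--Spruck type, which is where $p$ subcritical enters in dimension 2) together with compactness, and track the set of solutions by compactness plus constant rank rather than uniqueness.

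Finally, let $T$ be the set of $t$ for which $\nabla^2 w_t>0$ in $\Om_t$. It is nonempty and open by Step 2 together with uniform $C^2$ control. It is closed because a limit has $\nabla^2 w\ge0$ with positive definiteness near the boundary, so by the constant rank theorem it is positive definite everywhere. Hence $T=[0,1]$.
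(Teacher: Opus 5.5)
\textbf{Comparison with the paper.} Your Steps 1 and 2 match the paper's two main tools. The first is the constant rank theorem, proved via $\varphi=\det\nabla^2 v$ and the strong maximum principle; there $p\le3$ enters only through the sign of the coefficient $\frac{3-p}{1-p}$ of $v_\phi^2$ produced by the curvature terms. The second is the collar lemma, which uses Hopf's lemma and $u_{\tau\tau}=-\kappa_g u_\eta$ on $\partial\Om$.

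Your Step 3 is a genuinely different route. The paper never deforms the domain. It fixes $\Om$ and continues in the exponent, anchored at $p=1$, where Lee--Wang gives strict concavity of $\log u_1$. Uniqueness near $p=1$ comes from Brezis--Oswald for $p<1$ and a Lin-type argument for $1<p\le p_0$. This gives $u_p/\lVert u_p\rVert_\infty\to u_1$, hence strict convexity of $-\log u_p$, and then of $\mp u_p^{(1-p)/2}=\mp e^{\alpha\log u_p}$ for $|p-1|$ small. Constant rank plus the collar lemma then propagate the property in $p$.

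Your deformation from a cap avoids Lee--Wang and the $p\to1$ asymptotics. In exchange it needs a convexity-preserving deformation (fine via gnomonic projection), estimates uniform in $t$, and a base case on caps that is not a ``direct check''. The tangential eigenvalue $\cot r\,w'>0$ is immediate. The radial inequality $w''>0$, i.e.\ $u u''<\frac{1+p}{2}u'^2$ with $u''=-u^p-\cot r\,u'$, needs a real ODE argument. For $0\le p<1$, uniqueness makes $t\mapsto u_t$ single-valued and continuous, and your scheme is otherwise sound in outline.

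\textbf{The gap: Step 3 for $1<p\le3$.} The theorem concerns an \emph{arbitrary} solution $u$ on $\Om$. Without uniqueness, the function $w_t$ in your set $T$ is not defined. You correctly note that nondegeneracy along the path is needed, but ``compactness plus constant rank'' does not replace it.

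What a priori bounds, a uniform Hopf constant and $C^2$ convergence give, combined with the collar lemma and constant rank, is only this: in the solution set $\mathcal S=\{(t,u_t)\}\subset[0,1]\times C^2$, the pairs with $\nabla^2 w_t>0$ form a relatively open and closed subset. Neither reading of $T$ then works.
\begin{itemize}
\item If $T$ means ``every solution on $\Om_t$ is good'', openness holds but closedness fails. A solution on $\Om_{t_0}$ need not be a limit of solutions on $\Om_{t_n}$ with $t_n\to t_0$: think of fold points, or a branch born at some $t_1>0$.
\item If $T$ means ``some solution is good'', closedness holds but openness needs the implicit function theorem, i.e.\ nondegeneracy. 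Even then the conclusion concerns only one solution on $\Om$.
\end{itemize}

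To reach the given $u$ you must know that its connected component in $\mathcal S$ meets $t=0$. Compactness cannot give this. Neither can degree theory, since a component born at $t_1>0$ carries total index zero. As written, your argument proves (c) only for solutions on a continuum that reaches the cap.

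The paper's $p$-continuation has the same problem. It has uniqueness only up to $p_0$. Beyond that it also treats $p\mapsto u_p$ as a single continuous branch when it sets $v^*=u_{p^*}^{(1-p^*)/2}$ and asserts $\nabla^2 v^*\ge0$. So the missing ingredient (uniqueness or nondegeneracy along the path) is not supplied there either.
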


\begin{corollary} \label{corollary_power}
For every $0 \leq p \leq 3,$ any solution $u$ of the Dirichlet problem \eqref{BVP0} has a unique, non-degenerate critical point (a maximum), and strictly convex superlevel sets. 
\end{corollary}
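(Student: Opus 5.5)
We will derive Corollary \ref{corollary_power} directly from Theorem \ref{power}. Set $v=u^{\frac{1-p}{2}}$ for $p\neq 1$ and $v=\log u$ for $p=1$. By Theorem \ref{power}, $v$ is strictly concave in $\Om$ when $0\le p\le 1$ and strictly convex when $1<p\le 3$. In every case $v=g(u)$ with $g$ strictly monotone on $(0,\infty)$. Indeed, $g(s)=s^{\frac{1-p}{2}}$ is increasing for $p<1$ and decreasing for $p>1$, and $g=\log$ for $p=1$. Consequently $\pm v$ is a strictly concave, strictly increasing function of $u$, with the sign $+$ for $p\le1$ and $-$ for $p>1$. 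We call this function $w$. Superlevel sets of $u$ are therefore superlevel sets of the strictly concave function $w$, and critical points of $u$ coincide with those of $w$, since $\nabla w=g'(u)\nabla u$ with $g'(u)\neq 0$ in $\Om$.

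\emph{Convexity of superlevel sets.} Take $t\in(0,\max u)$ and $x,y\in\{u>t\}$. First we need a minimizing geodesic joining $x$ and $y$ that lies inside $\Om$. This is where the uniform (hence strict) convexity of $\Om$ is used: the unique minimizing geodesic from $x$ to $y$ lies in $\Om$ except possibly at its endpoints, and here the endpoints are interior points. Along this geodesic, $w\circ\gamma$ is strictly concave in $s$, so it is strictly larger than $\min\{w(x),w(y)\}$ at interior times. Hence the open geodesic segment lies in $\{u>t\}$. To get \emph{strict} convexity of the level curves, we will argue as follows. On the regular set, the level curve $\{u=t\}$ has geodesic curvature
\[
\kappa=-\frac{\nabla^2 w(T,T)}{|\nabla w|},
\]
where $T$ is the unit tangent. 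This quantity is positive because $\nabla^2 w(T,T)<0$, so each regular level curve is strictly convex. Regularity of every level $t\in(0,\max u)$ follows from the next step.

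\emph{Unique nondegenerate maximum.} Suppose $x_0$ is a critical point of $u$ in $\Om$. Since $\nabla w(x_0)=0$ and $w$ is strictly concave along every geodesic through $x_0$, the point $x_0$ is a strict local maximum of $w$. Strict concavity along geodesics inside the convex set $\Om$ then shows that it is the unique global maximum. Indeed, if $x_1\neq x_0$ were another critical point, then $w\circ\gamma$ along the geodesic joining them would be strictly concave with zero derivative at both ends, which is impossible. Hence $u$ has exactly one critical point, namely its maximum, and all levels $t<\max u$ are regular.

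For nondegeneracy at $x_0$, the Hessian formula reduces to $\nabla^2 w(x_0)=g'(u(x_0))\nabla^2 u(x_0)$ because $\nabla u(x_0)=0$. Here the main subtlety appears. Strict concavity in the sense of the paper's definition means $\nabla^2 w(\dot\gamma,\dot\gamma)<0$, that is, a negative definite Hessian, which is stronger than mere strict concavity of the composed functions. We will use the fact that the proof of Theorem \ref{power}, a constant-rank/deformation argument, yields a definite Hessian at every point. Then $\nabla^2 u(x_0)$ is definite with the sign of $\nabla^2 w$ divided by $g'$. Since $-\Delta u(x_0)=u^p(x_0)>0$, that sign is negative, so $x_0$ is a nondegenerate maximum.
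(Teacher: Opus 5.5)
Your proposal is correct and follows essentially the same route as the paper: critical points of $u$ and $v$ coincide because $\nabla v=g'(u)\nabla u$; two distinct critical points are ruled out by the strict concavity (or convexity) of $v\circ\gamma$ along the joining geodesic; nondegeneracy comes from the identity $\nabla^2 v(x_0)=g'(u(x_0))\nabla^2u(x_0)$ at the critical point; and strict convexity of the superlevel sets comes from $\kappa=-\nabla^2 w(T,T)/|\nabla w|>0$ on the regular level curves. You do not need to go back into the proof of Theorem~\ref{power} for the definiteness of the Hessian: it is already contained in the paper's definition of strict concavity, since every unit vector at an interior point is the velocity of a short minimizing geodesic lying in $\Om$.
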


\bigskip
Below we make a few comments on Theorem \ref{power} and Corollary \ref{corollary_power}.

\begin{remark}\label{rmk_mere_convexity}
If the uniform convexity hypothesis on $\Om \subset \S^2$ is weakened to mere convexity, then similar conclusions to the ones given in Theorem \ref{power} and Corollary \ref{corollary_power} may be obtained. We discuss this in Section \ref{section_generalization}.
\end{remark}

\begin{remark}
For $p=0,$ Theorem \ref{power} \ref{sublinear} gives the spherical analogue of the classical $1/2-$concavity of the torsion function in a bounded convex domain $\Om \subset \R^2;$ see \cite{MAKAR-LIMANOV}. In the spherical setting, both the strict $1/2$-concavity of the torsion function and the strict convexity of its superlevel sets appear to be novel results, and are here obtained without making any assumption on the circumradius of $\Om \subset \S^2.$ By Corollary \eqref{corollary_power}, the uniqueness and nondegeneracy of the critical point of the torsion function follow from this stronger result. These two properties have been recently proven to hold for the torsion function in convex domains $\Om \subset \S^2$ via different techniques, in a work by the first two authors and Gladiali \cite{GLADIALI-GROSSI-PROVENZANO}; see also \cite{GROSSI-PROVENZANO}.
\end{remark}

\begin{remark}
For $0 < p <1,$ the conclusion in Theorem \ref{power} \ref{sublinear} is analogous to the one obtained by Keady \cite{KEADY} for the same equation in a bounded convex domain $\Om \subset \R^2,$ using techniques following \cite{ACKER-PAYNE-PHILLIPIN} and \cite{MAKAR-LIMANOV}.
\end{remark}

\begin{remark}
For $p=1,$ the conclusion of Theorem \ref{power} \ref{eigenvalue} is well-known and holds also in higher dimensions \cite{LEE-WANG}. We recall this result in Proposition \ref{log_eigen}. 
\end{remark}

\begin{remark}
For $1 < p \leq 3,$ the conclusion in Theorem \ref{power} \ref{superlinear} is analogous to a result due to Lin \cite{LIN} for a bounded convex domain $\Om \subset \S^2,$ there obtained by employing the constant-rank theorem of Korevaar and Lewis \cite{KOREVAAR-LEWIS}. We remark that the result in \cite{LIN} obtains the convexity of $u^{\frac{1-p}{2}}$ for every $p>1.$ In contrast, while in Lemma \ref{lemma_boundary_rank} we obtain that for every $p>1$ the strict convexity of $u^{\frac{1-p}{2}}$ holds in a small tubular neighborhood $\Om_\delta$ of the boundary of a uniformly convex domain $\Om \subset \S^2$, for $p>3$ we cannot guarantee that this holds true in the whole domain. It is not clear to us if, in the spherical setting, the assumption $1\leq p \leq 3$ presents itself as intrinsic or merely technical, which begs us to pose: \\

\textbf{Open problem.} Let $\Om \subset \S^2$ be a uniformly convex domain with smooth boundary and $u$ be a solution of \eqref{BVP0}. Are the superlevel sets of $u$ strictly convex for $p>3$?
\end{remark}

\smallskip

\subsection{Strategy of the proof}

In general, it is too strong to expect a solution $u$ of the Dirichlet problem \eqref{BVP0} to be strictly convex or strictly concave in a uniformly convex domain $\Om \subset \S^2.$ One may then ask whether this is true about $g(u)$, for some appropriate function $g:(0, \infty) \to \R.$ Following this idea, we shall consider

\begin{align}
&g(s) = s^{\frac{1-p}{2}} \quad \text{if } 0\leq p < 1 \; \text{or} \; 1< p \leq 3,  \label{g_sub} \\
\end{align} 

By setting $v:= g(u)= u^{\frac{1-p}{2}},$ observe that $v$ satisfies the equation

\begin{align}
\Delta v = \frac{1}{v} \left( -\frac{1+p}{1-p} |\nabla v|^2 - \frac{1-p}{2}\right) \quad \text{in } \Om \subset \S^2.  \label{deltav_sub}
\end{align}

\medskip

Throughout the paper, unless otherwise specified, we shall always consider that

\begin{equation} \label{defomega}
\boxed{
    \Om \subset \S^2 \, \text{ is a \emph{uniformly convex} domain with smooth boundary $\partial\Om$}.
    }
\end{equation}
\smallskip

The proof of Theorem \ref{power} \ref{sublinear}, \ref{superlinear} uses the following four ingredients:
\begin{enumerate} [(I)]
    \item The strict log-concavity of the first Dirichlet eigenfunction $u_1$ of $-\Delta$ in a uniformly convex domain $\Om \subset \S^2,$ normalized so that $\max_\Om u_1 = 1;$ see Proposition \ref{log_eigen}. \label{ing1}
    \item The uniform convergence $\frac{u}{||u||_\infty} \to u_1$ as $p \to 1$ in $\Om \subset \S^2,$ where $u$ is a solution of \eqref{BVP0} and $u_1$ is the normalized first Dirichlet eigenfunction from \ref{ing1}; see Corollaries \ref{corollary_sub} and \ref{corollary_super}. \label{ing2}
    \item The full rank of the covariant Hessian of $u^{\frac{1-p}{2}}$ in a tubular neighborhood of the boundary $\partial \Om$; see Lemma \ref{lemma_boundary_rank}. \label{ing3}
  \item A constant rank theorem for the covariant Hessian of $u^{\frac{1-p}{2}}$; see Theorem \ref{MAIN}. \label{ing4}
\end{enumerate} 

The strategy of the proof of Theorem \ref{power} \ref{sublinear}, \ref{superlinear} goes by fixing the domain and employing a continuation argument in the exponent $p,$ rather than deforming a geodesic ball into a uniformly convex domain as in the classical deformation method (e.g. \cite{CAFFARELLI-FRIEDMAN}, \cite{LEE-WANG}). Here we describe the superlinear case $1<p\leq3.$ By \ref{ing1} and \ref{ing2}, provided the domain $\Om \subset \S^2$ is uniformly convex and the exponent $p>1$ is sufficiently close to 1, problem \eqref{BVP0} has a unique solution $u$ such that $v=u^{\frac{1-p}{2}}$ is strictly convex in $\Om.$ We then consider $p>1$ to be the first exponent for which this strict convexity fails in $\Om,$ so that there is some point $x_0$ for which the covariant Hessian matrix of $v,$ denoted $\nabla^2 v,$ has rank 1. By \ref{ing4}, $\Delta v$ (see equation \eqref{deltav_sub}) has the suitable structure to guarantee that $\nabla^2 v$ has constant rank 1 in $\Om.$ But by \ref{ing3}, $\nabla^2 v$ has full rank near the boundary, thus forcing $\nabla^2 v$ to have rank 2 at $x_0,$ which is a contradiction. Hence $u^{\frac{1-p}{2}}$ is strictly convex in $\Om$ for every $1<p\leq 3.$ The sublinear case $0 \leq p < 1$ is analogous.

\smallskip

\subsection{Organization of the paper} \label{subsection_organization}
The present paper is divided as follows:  \\
• In Section \ref{section_preliminaries} we recall some geometric notions, present some background concepts on the stereographic projection (which will help us in the proof of Theorem \ref{theorem_uniqueness}), and recall the $\log$-concavity result for the first eigenfunction \cite{LEE-WANG} (Proposition \ref{log_eigen}). \\
•  In Section \ref{section_uniqueness} we show that, for some $p_0 > 1,$ problem \eqref{BVP0} admits at most one positive solution for each $0 \leq p <p_0,$ provided $\Om\subset \S^2$ is uniformly convex. For $p$ in this range, we also prove the uniform convergence mentioned in \ref{ing2}.  \\
• In Section \ref{section_concavity_boundary} we consider the transformation $v:=u^{\frac{1-p}{2}}$ and prove, in Lemma \ref{lemma_boundary_rank}, that the covariant Hessian $\nabla^2 v$ has full rank in a tubular neighborhood of $\partial\Om$. \\
• In Section \ref{section_constant_rank} we prove the main tool used in the proof of Theorem \ref{power}, namely a constant rank theorem for the covariant Hessian of $u^{\frac{1-p}{2}}.$\\
• In Section \ref{section_proof_power} we finally prove Theorem \ref{power} and Corollary \ref{corollary_power}.\\
• In Section \ref{section_generalization} we consider some directions of further investigation: the relaxation of the uniform convexity assumption (as mentioned in Remark \ref{rmk_mere_convexity}) and the generalization of the results here presented to $\S^n.$

\section{Preliminaries} \label{section_preliminaries}
In this Section we recall some basic notions and present some preliminary results that will be used in the proofs of our results.

\subsection{Geometric framework and conventions}

We consider the unit sphere $\S^2 = \{ (x,y,z) \in \R^3 :
x^2 + y^2 + z^2 = 1  \} \subset \R^{3}$ endowed with the standard round metric $g$ of constant curvature $1$, namely the metric induced by the Euclidean metric $g_E$ on $\R^3$ through the canonical inclusion $\S^2 \hookrightarrow \R^3.$ We shall write $\S^2$ in place of $(\S^2, g)$ without further mention.

For $p,q\in\S^2$, we denote by $d (p,q)$ the geodesic distance, and by $d (q,\partial\Om):=\inf_{p \in \partial \Om} d(p,q)$ the distance of a point to the boundary of $\Om$. Moreover, for each $p \in \partial \Om,$ we denote by $\nu$ the outward unit normal to $\Om$ at $p,$ and by $\tau$ the unit tangent vector to $\partial \Om$ at $p$ chosen so that $\{ \tau, \nu, N \}$ is a positively oriented basis of $\R^3.$ Here $N=(x,y,z)$. We shall adopt the convention $\kappa_g:= - \langle \nabla_\tau \tau, \nu \rangle$ for the geodesic curvature of $\partial \Om$ with respect to $\nu,$ so that in accordance to Definition \ref{def_GCdomain}, a uniformly convex domain $\Om \subset \S^2$ has strictly positive geodesic curvature.

Throughout our exposition we shall work with the smooth function $v:\Om\subset \S^2\to \R$ given by $v:=g(u),$ where $g(s)=s^{\frac{1-p}{2}}.$ An important object used throughout our analysis is the covariant Hessian of $v$, denoted by $\nabla^2 v,$ which is the symmetric bilinear form on the tangent bundle $T\Om$ defined by $\nabla^2_{X,Y} v := \nabla^2 v (Y,X) = Y(Xv) - (\nabla_Y X)v,$ where $X,Y$ are smooth tangent vector fields on $\Om.$ Relative to a local orthonormal frame $\{e_k\}, k=1,2$, it is represented by the symmetric matrix $\left(\nabla^2 v\right)_{ij} := \left(\nabla^2 v(e_i,e_j)\right).$  Although this matrix depends on the chosen frame, its rank and definiteness are intrinsic, i.e. coordinate independent. Since our arguments involve only these quantities, we shall often identify $\nabla^2 v$ with its matrix representation $(\nabla^2 v)_{ij}$ with respect to a convenient local orthonormal frame $\{e_k \}, k = 1,2,$ without risk of confusion. 

\subsection{Log-concavity of the first eigenfunction}
The following result constitutes the first ingredient for the proof of Theorem \ref{power}, as explained in Subsection \ref{subsection_organization}.
\medskip

\begin{proposition}  [Lee--Wang \cite{LEE-WANG}] \label{log_eigen}
    Let $\Om \subset \S^n$ be a strictly convex domain with smooth boundary, and let $u_1$ be a solution of the Dirichlet problem \eqref{BVP_eigen}. Then $\log u_1$ is strictly concave in $\Om.$
\end{proposition}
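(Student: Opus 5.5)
The statement is Lee--Wang's result, so the plan is to reconstruct their deformation argument in the spirit of Caffarelli--Friedman. We work with $w:=\log u_1$, which satisfies
\begin{equation}
\Delta w = -\lambda_1 - |\nabla w|^2 \quad \text{in } \Om .
\end{equation}
We want to show that $\nabla^2 w$ is negative definite everywhere in $\Om$.

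The argument has three pieces.

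\textbf{Model domain.} First we treat the case where $\Om$ is a geodesic ball $B_R$ with $R<\pi/2$. There $u_1$ is radial, $u_1=\phi(r)$, and it solves the spherical Bessel-type ODE
\begin{equation}
\phi''+\cot r\,\phi'+\lambda_1\phi=0 .
\end{equation}
The Hessian of $w=\log\phi(r)$ has eigenvalues $(\log\phi)''$ in the radial direction and $\cot r\,(\log\phi)'$ in the tangential direction. The tangential one is negative because $\phi'<0$ for $r>0$. For the radial one, we set $h=-\phi'/\phi$, which satisfies a Riccati equation
\begin{equation}
h'=h^2-\cot r\, h+\lambda_1 .
\end{equation}
A comparison argument then shows $h'>0$ on $(0,R)$, using $h(0)=0$, $h'(0)=\lambda_1/2>0$ and $h\to+\infty$ at $R$.

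\textbf{Boundary layer.} Near $\partial\Om$ we write $u_1 = d\,(a+O(d))$, with $d$ the distance to the boundary and $a=|\nabla u_1|>0$ by Hopf's lemma. The Hessian of $\log u_1$ in a frame $\{\tau,\nu\}$ is dominated by the normal entry $-1/d^2$. The tangential entry is approximately $-\kappa_g/d$, up to bounded terms, by the second fundamental form of the level curves, and the mixed entries are $O(1/d)$. So in a strip $\{d<\delta\}$, with $\delta$ depending only on $\kappa_0$, the $C^3$ norm of $\partial\Om$ and $\lambda_1$, the matrix is negative definite. This is the step where strict (uniform) convexity of $\partial\Om$ is used; the $\S^n$ version is the same with principal curvatures in place of $\kappa_g$.

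\textbf{Constant rank and deformation.} We join $B_R$ to $\Om$ by a smooth family of strictly convex domains $\Om_t$, $t\in[0,1]$, with $\Om_0=B_R$ and $\Om_1=\Om$. For instance, we interpolate the support-type functions in a stereographic chart, or use Minkowski-type combinations in a hemisphere containing both domains. Along this family, $u_{1,t}$ and $\lambda_1(t)$ depend continuously in $C^2_{loc}$, since the first eigenvalue is simple. Suppose there is a first $t$ at which $\nabla^2 w_t$ becomes semidefinite but degenerate at some interior point. We then invoke a constant rank theorem: if $\nabla^2 w\le 0$ solves the above equation, then the smallest eigenvalue $\mu$ satisfies a differential inequality
\begin{equation}
\Delta\mu + \langle b,\nabla\mu\rangle \lesssim C\mu
\end{equation}
near points where $\mu=0$. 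Here the Ricci term on $\S^n$ contributes with a favourable sign. The strong maximum principle then forces $\mu\equiv 0$ in $\Om_t$, which contradicts the negative definiteness in the boundary strip, whose width is uniform in $t$. Hence $\nabla^2 w<0$ for all $t$, in particular at $t=1$.

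\textbf{Main obstacle.} The hard step is the constant rank computation. One differentiates the equation twice, using the commutation formula
\begin{equation}
w_{ijk}-w_{ikj}=R_{\cdot}\,w
\end{equation}
and the Ricci identities on the sphere, and must verify that the curvature terms $\pm(w_{11}-w_{22})$ and $|\nabla w|^2$ corrections have the right sign at a point where $w_{11}=0$ and $w_{12}=0$. The plan is to choose a frame diagonalizing $\nabla^2 w$ at the point, compute $\Delta w_{11}$, and absorb the third-derivative terms using $w_{11k}=0$ there, as in Korevaar--Lewis.
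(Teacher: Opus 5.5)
Your outline follows the route the paper has in mind. The paper does not prove Proposition~\ref{log_eigen} itself; it describes Lee--Wang's proof as the Caffarelli--Friedman deformation of a geodesic ball into $\Om$, which is your model ball, boundary layer and constant-rank scheme.

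There is one genuine gap relative to the statement as written. In the paper's terminology (Definition~\ref{def_GCdomain}), strict convexity does not give $\kappa_g\ge\kappa_0>0$; that is the separate, stronger notion of uniform convexity. A strictly convex $\Om$ may have boundary points where $\kappa_g$ (on $\S^n$, the second fundamental form) vanishes. Near such a point your expansion gives a tangential entry $-\kappa_g/d+O(1)$ of undetermined sign, so no strip of definiteness follows. In particular, the strip uniform in $t$ that your deformation needs is lost at $t=1$.

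As it stands, you prove the result only for uniformly convex domains, which is all the paper ever uses. To reach the stated version, add a limiting step:
\begin{enumerate}[(i)]
\item Approximate $\Om$ by uniformly convex domains and apply your result there.
\item Use simplicity of $\lambda_1$ to get $C^2_{loc}$ convergence of the normalized eigenfunctions, hence $\nabla^2\log u_1\le 0$ in $\Om$.
\item Apply your constant rank theorem in $\Om$.
\item Near a boundary point where the second fundamental form is positive definite (e.g.\ a contact point with a smallest enclosing geodesic ball), the local version of your boundary computation gives full rank, which then propagates to all of $\Om$.
\end{enumerate}

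Two smaller corrections. First, carry out the deformation in the gnomonic (central) chart of an open hemisphere containing $\Om$. That chart maps geodesics to lines, so it preserves convexity and positivity of curvature in both directions. The stereographic chart does not: by Lemma~\ref{lemma_curvatures_conformal_euclidean}, Euclidean convexity there does not imply $\tilde\kappa\ge 0$.

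Second, in the constant-rank step the curvature contributions are not all favourable. On $\S^2$, take $\varphi=\det\nabla^2 w$ and a frame at $x$ with $w_{12}=0$ and $w_{11}\sim 0$. The identities $\Delta\nabla^2 w=\nabla^2\Delta w+2(2\nabla^2 w-\Delta w\,g)$, $w_{121}=w_{112}+w_2$ and $w_{122}=w_{221}+w_1$ give
\[
\Delta\varphi\sim -2w_{22}w_2^2-2w_{22}^2-2|\nabla w|^2 .
\]
The first term is positive. It is absorbed only through the equation, since $-w_{22}\sim\lambda_1+|\nabla w|^2\ge w_2^2$. This yields $\Delta\varphi\lesssim -2(\lambda_1+|\nabla w|^2)(\lambda_1+w_1^2)<0$. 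So the computation does close, but because of the structure $\Delta w=-\lambda_1-|\nabla w|^2$, not because the curvature term has a sign.

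Also, on $\S^n$ the radial equation carries $(n-1)\cot r$ rather than $\cot r$. Your Riccati argument is unaffected, since $h''=(n-1)h/\sin^2 r>0$ wherever $h'=0$.
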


\medskip

\subsection{Stereographic projection} \label{subsection_stereo}
In Theorem \ref{power} we will prove that the uniqueness of the solution of problem \eqref{BVP0} holds in a convex domain $\Om \subset \S^2,$ provided the exponent $p$ is sufficiently close to $1.$ To this end, it is convenient to identify $\S^2 \setminus \{q\}$ with $\R^2$ via stereographic projection, so that $\R^2$ is endowed with a conformal metric.

\medskip

Let $q$ be any point of $\S^2$ (e.g. the north pole). Then $\S^2\setminus\{q\},$ endowed with the standard round metric $g,$ is isometric to $\R^2$ endowed with the conformal metric $\tilde{g} := \frac{4}{(1+X^2+Y^2)^2}(dX^2+dY^2).$ Here $(X,Y)$ are the usual cartesian coordinates in $\R^2$. Let then $\Om \subset \S^2$ be a convex domain. By Definition \ref{def_GCdomain}, we may assume that $\Om$ is contained in an open hemisphere, with the antipode $-q \in \Om$ (e.g. the south pole). Then $\Om \subset \S^2$ with the round sphere metric $g$ is isometric to $\widetilde\Om\subset\R^2$ with the conformal metric $\widetilde{g},$ and $\widetilde \Om$ is contained in the disk of radius $1$ centered at the origin; see Corollary \ref{corollary_isometry} below. This first lemma relates the curvature of $\partial \widetilde\Om \subset \R^2$ for the conformal metric $\tilde{g}$ (which is also the curvature of $\partial \Om\subset\S^2$) with the curvature of $\partial \widetilde\Om$ for the Euclidean metric $g_E.$
\begin{lemma} \label{lemma_curvatures_conformal_euclidean}
Let $\widetilde\Om\subset\R^2$ be a smooth bounded domain.  Suppose that $\widetilde\Om$ has boundary curvature $\tilde{\kappa}$ with respect to the metric $\tilde{g} = \frac{4}{(1+X^2+Y^2)^2}(dX^2+dY^2),$ and boundary curvature $\kappa_E$ with respect to the Euclidean metric $g_E = dX^2+dY^2$. Then
\begin{equation} \label{kappas_can_stereo}
\tilde{\kappa}=\frac{1+X^2+Y^2}{2}\kappa_E-\langle (X,Y),\nu_E\rangle,
\end{equation}
where $\nu_E$ is the outward unit normal to $\widetilde\Om$ for the Euclidean metric and $\langle\cdot,\cdot\rangle$ is the  Euclidean scalar product.
\end{lemma}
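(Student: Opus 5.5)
The plan is to use the standard transformation law for geodesic curvature under a conformal change of metric. We write $\tilde g = e^{2\varphi} g_E$ with
\[
\varphi(X,Y) = \log 2 - \log(1+X^2+Y^2).
\]
For a curve in a two-dimensional Riemannian manifold, conformal change gives
\[
\tilde\kappa = e^{-\varphi}\bigl(\kappa_E + \partial_{\nu_E}\varphi\bigr),
\]
provided both curvatures are taken with respect to the outward normal and with the same sign convention. The relevant conventions are $\kappa = -\langle \nabla_\tau\tau,\nu\rangle$ and $\tilde\nu = e^{-\varphi}\nu_E$. Once this formula is in hand, the claim is a direct substitution:
\[
e^{-\varphi} = \frac{1+X^2+Y^2}{2}, \qquad \nabla\varphi = -\frac{2(X,Y)}{1+X^2+Y^2},
\]
so
\[
e^{-\varphi}\,\partial_{\nu_E}\varphi = -\langle (X,Y),\nu_E\rangle,
\]
which is exactly \eqref{kappas_can_stereo}.

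To justify the conformal formula, parametrize $\partial\widetilde\Om$ by Euclidean arclength $s$, with unit tangent $\tau_E$. The $\tilde g$-unit tangent is $\tilde\tau = e^{-\varphi}\tau_E$, and the $\tilde g$-unit outward normal is $\tilde\nu = e^{-\varphi}\nu_E$. The Levi-Civita connection of $\tilde g$ satisfies the Koszul-formula identity
\[
\tilde\nabla_X Y = \nabla^E_X Y + X(\varphi)\,Y + Y(\varphi)\,X - g_E(X,Y)\,\nabla^E\varphi.
\]
We compute $\tilde\nabla_{\tilde\tau}\tilde\tau = e^{-\varphi}\tilde\nabla_{\tau_E}(e^{-\varphi}\tau_E)$ and note that all terms proportional to $\tau_E$ drop out after pairing with $\tilde\nu$. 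Hence
\[
\tilde g\bigl(\tilde\nabla_{\tilde\tau}\tilde\tau,\tilde\nu\bigr) = e^{2\varphi}\cdot e^{-2\varphi}\cdot e^{-\varphi}\bigl(\langle\nabla^E_{\tau_E}\tau_E,\nu_E\rangle - \partial_{\nu_E}\varphi\bigr).
\]
Taking minus this quantity gives $\tilde\kappa = e^{-\varphi}(\kappa_E + \partial_{\nu_E}\varphi)$.

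The main obstacle is not conceptual but bookkeeping. We must keep the sign convention $\kappa_g = -\langle\nabla_\tau\tau,\nu\rangle$ consistent, so that convex curves have positive curvature, and track the powers of $e^{\varphi}$ correctly. As a check, we test the formula on circles $X^2+Y^2=r^2$. There $\kappa_E = 1/r$ and $\langle (X,Y),\nu_E\rangle = r$, so the formula gives
\[
\tilde\kappa = \frac{1+r^2}{2r} - r = \frac{1-r^2}{2r}.
\]
This equals $\cot\theta$ for the spherical circle of polar radius $\theta$ with $r = \tan(\theta/2)$, which is the known geodesic curvature of a latitude circle. It also correctly vanishes at $r=1$, the equator.
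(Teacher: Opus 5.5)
Your proof is correct and follows the same route as the paper: the conformal transformation law for geodesic curvature, specialized to $e^{\varphi}=\rho=2/(1+X^2+Y^2)$. Your Koszul-formula derivation is in fact more careful than the paper's one-line proof. The paper quotes the law as $\kappa_{\rho^2g}=\rho^{-1}(\kappa_g-\partial_\nu\log\rho)$; with outward $\nu$ and the convention $\kappa_g=-\langle\nabla_\tau\tau,\nu\rangle$ that sign is off, and substituting literally gives $+\langle (X,Y),\nu_E\rangle$. Your law $\tilde\kappa=e^{-\varphi}(\kappa_E+\partial_{\nu_E}\varphi)$, together with your circle check, confirms the stated formula.
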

\begin{proof}
The proof follows from the following well-known fact: if $(M, g)$ is a $2$-dimensional Riemannian manifold,  $\Om \subset M$ is a smooth domain with outward unit normal $\nu$ (for the metric $g$), $\kappa_g$ is the curvature of $\partial\Om$ with respect to $\nu$, and $\rho>0$, then $\kappa_{\rho^2g}=\frac{1}{\rho}(\kappa_g-\partial_{\nu}\log(\rho))$, where $\kappa_{\rho^2g}$ is the curvature of $\partial\Om$ in the conformal metric $\rho^2g$. Then taking $(M,g) = (\R^2, g_E)$ and $\rho=\frac{2}{1+X^2+Y^2}$ gives \eqref{kappas_can_stereo}.  
\end{proof}

\smallskip

From Lemma \ref{lemma_curvatures_conformal_euclidean} we deduce this next Corollary:

\smallskip

\begin{corollary} \label{corollary_isometry}
    Let $\Om\subset\S^2$ be a convex domain with smooth boundary. Then $\Om$ is isometric to a domain $\widetilde\Om\subset\R^2$ (for the conformal metric $\tilde g$) which is contained in the disk of radius $1$ centered at the origin and is convex with respect to the Euclidean metric. Moreover, if $\Om \subset \S^2$ is uniformly convex, then $\widetilde \Om \subset \R^2$ is uniformly convex with respect to the Euclidean metric.
\end{corollary}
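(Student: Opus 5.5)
We place $\Om$ in the open hemisphere centered at the south pole $-q$ and then use Lemma \ref{lemma_curvatures_conformal_euclidean}. By Definition \ref{def_GCdomain}, $\Om$ is geodesically convex, and we first show that it lies in an open hemisphere. For a smooth convex domain with $\kappa_g\ge 0$ we use the supporting great circles: through each boundary point, the great circle tangent to $\partial\Om$ leaves $\Om$ on one side, by convexity and a standard local-to-global argument for curves of nonnegative geodesic curvature on $\S^2$. Suppose $\overline\Om$ met a closed hemisphere's complement in every direction. Then $\Om$ would contain a pair of antipodal points, or equal a hemisphere, and minimizing geodesics between antipodal points are not unique and not all contained in $\overline\Om$. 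This contradicts convexity for a smooth proper domain (a hemisphere has $\kappa_g=0$ and is excluded in the uniform case; in the merely convex case we use that $\partial\Om$ is smooth and $\Om\neq$ hemisphere, or accept closed-hemisphere containment and perturb). So we may choose a point $-q\in\Om$ such that $\overline\Om\subset\{x:\langle x,-q\rangle>0\}$, for instance by taking $-q$ as the center of a hemisphere containing $\overline\Om$ after a slight rotation.

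Next we apply stereographic projection from $q$. The open hemisphere centered at $-q$ maps isometrically onto the unit disk $\{X^2+Y^2<1\}$ with the metric $\tilde g$, which proves the first claim. For the Euclidean convexity, Lemma \ref{lemma_curvatures_conformal_euclidean} gives
\begin{equation*}
\kappa_E=\frac{2}{1+X^2+Y^2}\Big(\tilde\kappa+\langle (X,Y),\nu_E\rangle\Big),
\end{equation*}
and $\tilde\kappa=\kappa_g\ge 0$ by the isometry. The remaining step is to control the sign of $\langle (X,Y),\nu_E\rangle$. Since $0\in\widetilde\Om$ (it is the image of $-q$), the natural move is to show that $\widetilde\Om$ is star-shaped with respect to the origin, so that $\langle (X,Y),\nu_E\rangle\ge0$ on $\partial\widetilde\Om$. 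This holds because geodesics of $\S^2$ through $-q$ are great circles through the poles, and these project to straight lines through the origin. By geodesic convexity, the minimizing geodesic from $-q$ to any boundary point lies in $\overline\Om$; within the hemisphere this geodesic is unique and projects to a radial segment. Hence $\widetilde\Om$ is star-shaped with respect to $0$. In fact $0$ is interior, and each ray meets $\partial\widetilde\Om$ exactly once, so the radial support function satisfies $\langle (X,Y),\nu_E\rangle\ge0$.

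It follows that $\kappa_E\ge 0$ everywhere on $\partial\widetilde\Om$. A smooth bounded connected domain whose boundary (connected, being a star-shaped Jordan curve) has nonnegative curvature is convex, so $\widetilde\Om$ is Euclidean convex. In the uniformly convex case, $\kappa_E\ge \frac{2}{1+X^2+Y^2}\kappa_0\ge \kappa_0>0$, using $X^2+Y^2<1$ on $\overline{\widetilde\Om}$, which is compact inside the disk. This gives uniform Euclidean convexity.

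The main obstacle is the first step: rigorously placing $\overline\Om$ inside an open hemisphere from the convexity definition. The star-shapedness step is clean once the hemisphere is fixed, because it relies only on radial geodesics being straight lines.
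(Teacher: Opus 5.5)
Your overall route is the one the paper has in mind: place $\overline\Om$ in an open hemisphere centred at some $-q\in\Om$, project stereographically from $q$, and apply Lemma \ref{lemma_curvatures_conformal_euclidean}. Your second half is correct and fills in what the paper leaves implicit. Star-shapedness about $0$ gives $\langle (X,Y),\nu_E\rangle\ge 0$, hence $\kappa_E\ge \frac{2}{1+X^2+Y^2}\tilde\kappa$, which yields Euclidean (uniform) convexity.

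The gap is the choice of $-q$; the paper only asserts this placement, so your proof has to supply it. Everything afterwards needs a point $-q\in\overline\Om$ that is also the centre of an open hemisphere containing $\overline\Om$. Your recipe (``the centre of a hemisphere containing $\overline\Om$ after a slight rotation'') does not produce one. If $\Om$ is a small cap near the rim of a hemisphere $H\supset\overline\Om$, the centre of $H$ is nowhere near $\Om$. This matters: the stereographic image of a great circle not through $q$ is a circle with the origin in its interior. So if $\partial\Om$ has a geodesic (or nearly geodesic) piece and $\Om$ lies on the side of it away from $-q$, the image of that piece has $\kappa_E<0$, and $\widetilde\Om$ is not convex. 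A short fix: once $\overline\Om\subset\{x_3>0\}$, the gnomonic chart $x\mapsto(x_1/x_3,x_2/x_3)$ maps $\overline\Om$ onto a compact convex set $C\subset\R^2$, since great circles go to lines. If $a\in C$ is the point nearest the origin, then $a\cdot y\ge|a|^2$ for all $y\in C$, so $\langle(a,1),(y,1)\rangle=1+a\cdot y\ge 1+|a|^2>0$. Thus the corresponding point $A\in\overline\Om$ satisfies $\min_{\overline\Om}\langle A,\cdot\rangle>0$, and by compactness any $-q\in\Om$ close enough to $A$ works.

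The hemisphere step itself is argued only under strict convexity. For a merely convex domain, Definition \ref{def_GCdomain} asks for \emph{some} minimizing geodesic in $\overline\Om$, so non-uniqueness between antipodal points is no contradiction; closed hemispheres and lunes satisfy the definition. Argue instead with the cone $K=\{tx:\ t\ge0,\ x\in\overline\Om\}$, which the definition makes a closed convex cone with nonempty interior. In the uniformly convex case, strictness rules out antipodal pairs, so $K$ is pointed and lies in an open half-space. In the merely convex case, if $K$ contains a line, it is either a wedge over that line or a half-space. A wedge gives a lune with corners at the ends of the line, which smoothness excludes. A half-space means $\Om$ is an open hemisphere, which you should not discard: the statement covers it, and its image is the open unit disk. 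Finally, your remark that every ray meets $\partial\widetilde\Om$ exactly once is not needed and is not justified by star-shapedness alone, so drop it.
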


Then each of the problems \eqref{BVP0} and \eqref{BVP_eigen} can be reformulated as an equivalent problem on a domain of $\R^2$. Let $\Om\subset \S^2$, and let $\widetilde\Om\subset\R^2$ be such that, for the conformal metric $\widetilde g = \frac{4}{(1+X^2+Y^2)^2}(dX^2+dY^2),$ it is isometric to $\Om$. Call $\pi:\Om \to \widetilde \Om$ this isometry (it is a stereographic projection). Then we have the following: 

\begin{lemma} \label{lemma_stereo}
Let $u$ be a solution of \eqref{BVP0} in $\Om\subset\S^2$. Then $\tilde u=u\circ\pi^{-1}$ solves
\begin{align} \label{eq_stereo}
\begin{cases}
\; \; \, -\Delta\tilde u=\displaystyle\frac{4}{(1+X^2+Y^2)^2}\tilde u^p & {\rm in\ }\widetilde\Om \subset \R^2,\\
\quad \, \; \; \; \; \tilde u>0 & {\rm in\ } \tilde\Om , \\
\quad \, \; \; \; \; \tilde u=0 & {\rm on\ }\partial\tilde\Om , \\
	     \end{cases}
\end{align}
where $-\Delta$ is the Euclidean Laplacian.
\end{lemma}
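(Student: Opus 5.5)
The claim is that if $\tilde g=\rho^2 g_E$ on $\R^2$ with $\rho=\frac{2}{1+X^2+Y^2}$, then the Laplace--Beltrami operators satisfy $\Delta_{\tilde g}=\rho^{-2}\Delta_E$, and that the Dirichlet problem transports through the isometry $\pi$. I would use three facts in turn.

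\textbf{Step 1: pulling back the equation.} Since $\pi:(\Om,g)\to(\widetilde\Om,\tilde g)$ is an isometry, it intertwines the Laplace--Beltrami operators:
\[
\Delta_g u=(\Delta_{\tilde g}\tilde u)\circ\pi .
\]
This holds because the Laplacian is defined intrinsically from the metric, for instance as $\mathrm{div}\,\nabla$, or via the Dirichlet energy, and both are preserved by isometries. Hence $-\Delta_{\tilde g}\tilde u=\tilde u^p$ in $\widetilde\Om$.

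\textbf{Step 2: the Laplacian of a two-dimensional conformal metric.} In dimension two, for $\tilde g=\rho^2(dX^2+dY^2)$ we have $\sqrt{\det\tilde g}=\rho^2$ and $\tilde g^{ij}=\rho^{-2}\delta^{ij}$. Therefore
\[
\Delta_{\tilde g}\tilde u=\frac{1}{\rho^2}\,\partial_i\!\left(\rho^2\rho^{-2}\partial_i\tilde u\right)=\rho^{-2}\Delta_E\tilde u .
\]
The factors of $\rho$ cancel inside the divergence precisely because $n=2$; this cancellation is the only point where the dimension matters. Substituting into Step 1 gives
\[
-\Delta_E\tilde u=\rho^2\tilde u^p=\frac{4}{(1+X^2+Y^2)^2}\,\tilde u^p .
\]

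\textbf{Step 3: sign and boundary conditions.} Positivity of $\tilde u$ in $\widetilde\Om$ is immediate from $u>0$ in $\Om$. The boundary condition follows because $\pi$ extends to a homeomorphism $\overline\Om\to\overline{\widetilde\Om}$ mapping $\partial\Om$ onto $\partial\widetilde\Om$; this uses $q\notin\overline\Om$, guaranteed by Corollary~\ref{corollary_isometry}. Hence $\tilde u=0$ on $\partial\widetilde\Om$, and smoothness is also preserved.

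There is no real obstacle in this argument. The only point requiring care is the dimension-two cancellation in Step 2.
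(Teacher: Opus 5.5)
Your proof is correct and takes the same approach as the paper. The paper justifies the lemma in one line via the two-dimensional identity $\Delta_{\tilde g}=\rho^{-2}\Delta_E$ for $\tilde g=\rho^2 g_E$. You make explicit the isometry invariance of the Laplace--Beltrami operator, derive that identity in divergence form, and transfer positivity and the boundary values using $q\notin\overline\Om$.
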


\begin{lemma} \label{lemma_stereo_eigen}
Let $u_1$ be the solution of \eqref{BVP_eigen} in $\Om \subset\S^2,$ normalized so that $\max_\Om u_1 = 1.$ Then $\tilde u_1=u_1\circ\pi^{-1}$ solves
 \begin{align} \label{BVP_eigen_R2}
        \begin{cases}
           \; -\Delta \tilde u_1 = \displaystyle\frac{4}{(1+X^2+Y^2)^2}  \lambda_1 \tilde u_1 \quad &\text{in} \; \; \widetilde \Om \subset \R^2, \\
           \quad \; \; \; \tilde u_1 > 0 \quad &\text{in} \; \; \widetilde \Om,\\
            \quad \; \; \; \tilde u_1 = 0 \quad &\text{on} \; \; \partial \widetilde \Om, \\
        \end{cases}
  \end{align}
where $-\Delta$ is the Euclidean Laplacian. Moreover, $\max_{\widetilde \Om} \tilde u_1 =1.$
\end{lemma}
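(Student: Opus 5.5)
The statement to prove is Lemma \ref{lemma_stereo_eigen}: the pulled-back eigenfunction $\tilde u_1$ solves \eqref{BVP_eigen_R2} and satisfies $\max_{\widetilde\Om}\tilde u_1=1$. The argument is a direct computation using the conformal invariance structure of the Laplace--Beltrami operator in dimension two, exactly as for Lemma \ref{lemma_stereo}.

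\textbf{Step 1: the Laplacian in conformal coordinates.} Since $\pi:\Om\to\widetilde\Om$ is an isometry between $(\Om,g)$ and $(\widetilde\Om,\tilde g)$, we have $\Delta_g u_1 = (\Delta_{\tilde g}\tilde u_1)\circ\pi$. It therefore suffices to compute $\Delta_{\tilde g}$. For a conformal metric $\tilde g=\rho^2 g_E$ on a two-dimensional domain, the Laplace--Beltrami operator is $\Delta_{\tilde g} = \rho^{-2}\Delta_E$. This follows from the coordinate formula
\[
\Delta_{\tilde g} f = \frac{1}{\sqrt{\det \tilde g}}\,\partial_i\!\left(\sqrt{\det \tilde g}\,\tilde g^{ij}\partial_j f\right),
\]
where $\sqrt{\det\tilde g}=\rho^2$ and $\tilde g^{ij}=\rho^{-2}\delta^{ij}$. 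The factors $\rho^{2}$ and $\rho^{-2}$ inside the divergence cancel precisely because $n=2$, which gives $\Delta_{\tilde g} f = \rho^{-2}\Delta_E f$.

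\textbf{Step 2: the transformed equation.} Take $\rho=\frac{2}{1+X^2+Y^2}$, so that $\rho^2=\frac{4}{(1+X^2+Y^2)^2}$. The equation $-\Delta_g u_1=\lambda_1 u_1$ then becomes
\[
-\rho^{-2}\Delta_E\tilde u_1=\lambda_1\tilde u_1,
\qquad\text{that is,}\qquad
-\Delta_E\tilde u_1=\frac{4}{(1+X^2+Y^2)^2}\,\lambda_1\tilde u_1 .
\]

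\textbf{Step 3: boundary condition, positivity and normalization.} Because $\pi$ is a diffeomorphism of $\overline\Om$ onto $\overline{\widetilde\Om}$ mapping $\partial\Om$ onto $\partial\widetilde\Om$, the conditions $\tilde u_1>0$ in $\widetilde\Om$ and $\tilde u_1=0$ on $\partial\widetilde\Om$ transfer directly from \eqref{BVP_eigen}. Composition with a bijection also preserves the supremum, so $\max_{\widetilde\Om}\tilde u_1=\max_\Om u_1=1$.

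No step is a real obstacle. The only point that needs care is that the cancellation in Step 1 is special to dimension two; in higher dimensions the conformal Laplacian picks up an additional first-order term.
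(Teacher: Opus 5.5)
Your proposal is correct and follows the paper's own argument. The paper justifies the lemma with the single remark that in dimension two $\Delta_{\tilde g}=\rho^{-2}\Delta$ for $\tilde g=\rho^2(dX^2+dY^2)$; you verify this identity from the coordinate formula and then carry the equation, boundary condition, positivity and normalization through the isometry $\pi$.
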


Lemmas \ref{lemma_stereo} and \ref{lemma_stereo_eigen} are just a consequence of the fact that, in dimension $2,$ the Laplacian in a conformal metric $\tilde g$ is simply the Laplacian in the original metric $g$ divided by the conformal factor, that is, $\Delta_{\tilde g} = \rho^{-2} \Delta_g.$

\medskip

\subsection{A classical uniqueness result for a sublinear Dirichlet problem}

In order to treat the sublinear case $0\leq p<1$ in the proof of Theorem \ref{theorem_uniqueness}, we recall this next classical result, which holds for any bounded domain of $\R^2.$

\begin{proposition} [Brezis and Oswald \cite{BREZIS-OSWALD}] \label{brezis_oswald}
    Let $\widetilde{\Om } \subset \R^2$ be a bounded domain with smooth boundary and consider the Dirichlet problem
    \begin{align}\label{sub_brezis_oswald}
    \begin{cases} 
            -\Delta \tilde{u} = \rho^2(x) f(\tilde{u}) \quad &\text{in} \; \widetilde{\Om} \subset \R^2, \\
             \quad \; \; \, \tilde{u} > 0 \quad &\text{in} \; \widetilde{\Om}, \\
             \quad \; \; \, \tilde{u}=0 \quad &\text{on} \; \partial\widetilde{\Om},
    \end{cases}
    \end{align}
    with $f:\widetilde{\Om} \to \R$ such that $s \mapsto \displaystyle\frac{f(s)}{s}$ is decreasing and $\rho^2(x) \geq 0,$ $\rho \not \equiv 0,$ for each $x \in \widetilde \Om.$ 
    Then there is at most one solution of \eqref{sub_brezis_oswald}.
\end{proposition}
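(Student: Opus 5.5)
This is the Brezis--Oswald uniqueness result; the plan is the classical argument based on Díaz--Saa's inequality, or equivalently Picone-type testing. Take two solutions $\tilde u,\tilde w$ of \eqref{sub_brezis_oswald}. They are smooth up to the boundary, positive inside, and vanish on $\partial\widetilde\Om$. By the Hopf lemma, $\partial_\nu \tilde u<0$ and $\partial_\nu \tilde w<0$ on $\partial\widetilde\Om$, which makes $\tilde u$ and $\tilde w$ comparable near the boundary: $c\,\tilde w\le \tilde u\le C\,\tilde w$ for some constants $0<c\le C$. (If $f(0)>0$, as for $p=0$, Hopf still applies, since $-\Delta\tilde u\ge 0$ when $\rho^2f\ge0$; for general sign one would use that $f(s)/s$ decreasing forces $f(s)\ge s\,f(s_0)/s_0$ for small $s$, and apply Hopf to $-\Delta \tilde u + c\tilde u\ge0$.) Consequently the quotients $(\tilde u^2-\tilde w^2)/\tilde u$ and $(\tilde w^2-\tilde u^2)/\tilde w$ are Lipschitz functions vanishing on $\partial\widetilde\Om$, so they are admissible test functions in $H^1_0(\widetilde\Om)$.

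Next, multiply the equation for $\tilde u$ by $(\tilde u^2-\tilde w^2)/\tilde u$, multiply the equation for $\tilde w$ by $(\tilde w^2-\tilde u^2)/\tilde w$, integrate by parts, and add. The right-hand side becomes
\[
\int_{\widetilde\Om}\rho^2\Big(\frac{f(\tilde u)}{\tilde u}-\frac{f(\tilde w)}{\tilde w}\Big)(\tilde u^2-\tilde w^2)\,dX\,dY .
\]
Since $s\mapsto f(s)/s$ is decreasing, this integral is $\le 0$. The left-hand side is
\[
\int_{\widetilde\Om}\Big|\nabla\tilde u-\frac{\tilde u}{\tilde w}\nabla\tilde w\Big|^2+\Big|\nabla\tilde w-\frac{\tilde w}{\tilde u}\nabla\tilde u\Big|^2\,dX\,dY\ \ge 0 .
\]
This identity is obtained by expanding $\nabla(\tilde w^2/\tilde u)=2\frac{\tilde w}{\tilde u}\nabla\tilde w-\frac{\tilde w^2}{\tilde u^2}\nabla\tilde u$, which is the Picone identity.

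Both sides must therefore vanish, and we would conclude in one of two ways. If $f(s)/s$ is strictly decreasing, which is the case needed here ($f(s)=s^p$ with $p<1$), the vanishing of the right-hand side, together with $\rho^2>0$ on the open set where $\rho\not\equiv0$, forces $\tilde u=\tilde w$ there. In general, the vanishing of the left-hand side gives $\nabla(\tilde u/\tilde w)=0$, so $\tilde u=k\tilde w$ for some constant $k>0$. Plugging this into the equation gives $\rho^2\big(f(k\tilde w)-kf(\tilde w)\big)=0$, and strict monotonicity yields $k=1$. For the application to \eqref{eq_stereo} we have $\rho^2=4/(1+X^2+Y^2)^2>0$ and $f(s)/s=s^{p-1}$ strictly decreasing, so both hypotheses hold.

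The main obstacle is the justification of the test functions near $\partial\widetilde\Om$, namely the boundary comparability of $\tilde u$ and $\tilde w$. The regularity of $\tilde u$ and $\tilde w$ up to the boundary, together with the Hopf lemma on the smooth domain, handles this. In a nonsmooth setting one would instead test with $\tilde u_\varepsilon=\tilde u+\varepsilon$, $\tilde w_\varepsilon=\tilde w+\varepsilon$, and pass to the limit $\varepsilon\to0$ by monotone and dominated convergence.
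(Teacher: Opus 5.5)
Your proof is correct and is essentially the classical Brezis--Oswald argument that the paper cites rather than reproduces. You test the two equations with $(\tilde u^2-\tilde w^2)/\tilde u$ and $(\tilde w^2-\tilde u^2)/\tilde w$, and the Hopf lemma together with $C^1$ regularity up to $\partial\widetilde\Om$ makes them admissible; $C^1$ is all you need, and for $0<p<1$ solutions are in fact not $C^\infty$ up to the boundary, despite your wording.

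You are also right to use strict decrease of $f(s)/s$ in both endings (your second ending is the one that also covers a weight vanishing on part of $\widetilde\Om$), and strictness cannot be dropped from the statement: for $f(s)=\lambda_1 s$, with $\lambda_1$ the first Dirichlet eigenvalue for the weight $\rho^2$, every positive multiple of the first eigenfunction is a solution.
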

We refer to (\cite{BREZIS-KAMIN}, Appendix II) or (\cite{BREZIS-OSWALD}, Theorem 1) for a proof of Proposition \ref{brezis_oswald}. We point out that in the proof of Theorem \ref{theorem_uniqueness} we shall take $\rho(x) = \frac{2}{1+|x|^2},$ where $x=(X,Y)\in \R^2.$

\medskip

\section{Uniqueness of the solution for $p$ close to 1 in a convex domain}  \label{section_uniqueness}

This section is devoted to the proofs of Theorem \ref{theorem_uniqueness} and Corollary \ref{corollary_sub}, given below. Besides their intrinsic interest, these results provide us with the ingredient \ref{ing2} needed for the proof of Theorem \ref{power}. Throughout the present section and without further mention, we consider that
\begin{equation} \label{normalized_eigen}
\boxed{
\text{ $u_1$ is the solution of \eqref{BVP_eigen} normalized so that $\rVert u_1 \lVert_\infty = \max_\Om u_1 = 1.$ }
}
\end{equation}

In the superlinear case, if $p=p_n$ with $p_n\downarrow1$ and $u_n := u_{p_n}$ denotes the corresponding solution of \eqref{BVP0}, then $\frac{u_n}{\lVert u_n \rVert_\infty}\to u_1$ uniformly in $\Om.$  The corresponding convergence in the sublinear case $p_n\uparrow1$ is obtained in Corollary \ref{corollary_sub}. The argument in Theorem \ref{theorem_uniqueness} goes by first translating the problem on $\S^2$ into an equivalent weighted Dirichlet problem on $\R^2$ (see equation \eqref{eq_stereo}), where the uniqueness argument is more tractable. Then for $0\leq p<1$, uniqueness follows directly from the classical result of Proposition \ref{brezis_oswald}, whilst for $1 < p \leq p_0$ we adapt an argument of Lin \cite{LIN}.

Let us also remark that even though problem \eqref{BVP0} may admit multiple positive solutions for $1 < p \leq 3,$ later in the proof of Theorem \ref{power} we shall only use the uniqueness in the limiting regime $p \to 1;$ see Corollaries \ref{corollary_sub} and \ref{corollary_super} below.  

\bigskip

\begin{theorem} \label{theorem_uniqueness}
    Suppose that $\Om \subset \S^2$ is a convex domain with smooth boundary. Then for each fixed $0 \leq p <1,$ the Dirichlet problem \eqref{BVP0} admits a unique positive solution $u.$ Moreover, there exists some $p_0>1$ such that, for each fixed $1 < p \leq p_0,$ the Dirichlet problem \eqref{BVP0} admits a unique positive solution $u.$ 
\end{theorem}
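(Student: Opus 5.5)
Via the stereographic projection of Subsection \ref{subsection_stereo}, I will reduce both statements to the weighted planar problem \eqref{eq_stereo} on $\widetilde\Om\subset\R^2$. By Corollary \ref{corollary_isometry}, this domain is Euclidean convex and contained in the unit disk, and the weight is $\rho^2(x)=\frac{4}{(1+|x|^2)^2}$.

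\textbf{Sublinear case $0\le p<1$.} Existence is standard: minimize $\frac12\int|\nabla u|^2-\frac1{p+1}\int (u^+)^{p+1}$, which is coercive since $p+1<2$, or use sub/supersolutions $\varepsilon u_1$ and a large multiple of the torsion function. Uniqueness follows directly from Proposition \ref{brezis_oswald} with $f(s)=s^p$. Indeed, $f(s)/s=s^{p-1}$ is decreasing for $p<1$, and $\rho>0$ on $\widetilde\Om$. Transporting back by $\pi$ gives uniqueness on $\Om$.

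\textbf{Superlinear case $p\downarrow 1$.} Existence for $1<p<\infty$ follows from the mountain pass theorem or Nehari minimization, since in dimension two every exponent is subcritical. For uniqueness I argue by contradiction, adapting Lin \cite{LIN}. Suppose $p_n\downarrow1$ and there are two distinct solutions $u_n\neq w_n$ of \eqref{eq_stereo}.

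First step: a priori bounds. Multiplying by $u_1$ and integrating gives $\lambda_1\int \rho^2 u_nu_1=\int\rho^2u_n^{p_n}u_1$, so $u_n^{p_n-1}$ cannot stay below or above $\lambda_1$ everywhere. Combined with a blow-up argument (Gidas--Spruck type, valid uniformly for $p$ near $1$) or the Pohozaev identity on the convex domain, this should give $\|u_n\|_\infty^{p_n-1}\to\lambda_1$.

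Second step: normalization. Set $U_n=u_n/\|u_n\|_\infty$. Then $-\Delta U_n=\rho^2\|u_n\|_\infty^{p_n-1}U_n^{p_n}$ with $0\le U_n\le 1$. Elliptic estimates give $C^{1,\alpha}$ compactness, and any limit is a nonnegative first eigenfunction with maximum $1$. So $U_n\to u_1$ uniformly; this is also ingredient \ref{ing2}. The same holds for $W_n=w_n/\|w_n\|_\infty$.

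Third step: linearization. Consider $\phi_n=(u_n-w_n)/\|u_n-w_n\|_\infty$, which solves $-\Delta\phi_n=\rho^2 c_n\phi_n$ with $c_n=p_n\xi_n^{p_n-1}$, where $\xi_n$ lies between $u_n$ and $w_n$.

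The main obstacle is controlling $c_n$. I need $\xi_n^{p_n-1}\to\lambda_1$ uniformly on compact subsets, and also need to handle the boundary, where $\xi_n\to0$ but $\xi_n^{p_n-1}$ might degenerate. First, I would show $\|u_n\|_\infty/\|w_n\|_\infty\to1$. This needs some care: both norms raised to the power $p_n-1$ tend to $\lambda_1$, but their ratio may not tend to $1$. If the ratio does not tend to $1$, then $\phi_n$ is close to a multiple of $u_1$ anyway. Near the boundary, I use the Hopf lemma: $u_1\sim d(x,\partial\Om)$, and $s^{p_n-1}$ is bounded on $[0,M]$ with $\ge$-type control. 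Then $c_n\le C$ and $c_n\to\lambda_1$ locally uniformly. Passing to the limit, $\phi_n\to\phi$ with $-\Delta\phi=\lambda_1\rho^2\phi$ and $\|\phi\|_\infty=1$, so $\phi=\pm u_1$.

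To conclude, I test the equations against $u_1$ and expand to first order in $p_n-1$. Writing $u_n=t_n(u_1+(p_n-1)\psi_n)$ and similarly for $w_n$, the solvability condition fixes $\log t_n$ uniquely: $\log t_n\to\frac{\int u_1^2\log u_1\rho^2}{\int u_1^2\rho^2}$-type constants, up to the normalization with $\lambda_1$. This shows both solutions have the same leading-order amplitude. The projection of $\phi_n$ onto $u_1$ must then vanish in the limit, contradicting $\phi=\pm u_1$.

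If this expansion proves delicate, I would instead use the nondegeneracy of the limiting problem: the linearized operator of the normalized problem at $u_1$, restricted to the amplitude direction, is invertible. An implicit function theorem in $(p,t)$ near $(1,t_*)$ then gives local uniqueness, which combined with the compactness above yields global uniqueness for $1<p\le p_0$.
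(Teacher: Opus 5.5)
Up to the identification $\phi_n\to\pm u_1$, your argument is sound and parallels the paper. This covers Brezis--Oswald for $p<1$; the lower bound $\|u_n\|_\infty^{p_n-1}\ge\lambda_1$ by testing with $u_1$ and a blow-up for the upper bound; and $c_n\to\lambda_1$ locally uniformly with $0\le c_n\le p_n\max\{\|u_n\|_\infty,\|w_n\|_\infty\}^{p_n-1}$. Since $c_n$ is bounded, $W^{2,q}$ estimates even give $\phi_n\to\pm u_1$ in $C^1(\overline{\widetilde\Om})$.

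\textbf{The gap is the final step.} The inference ``both solutions have the same leading-order amplitude, so the projection of $\phi_n$ onto $u_1$ vanishes in the limit'' does not hold. Equal amplitudes only say that $u_n-w_n$ is small compared with the common amplitude. But $\phi_n$ is normalized by $\|u_n-w_n\|_\infty$, which can be much smaller, so nothing forces $\int\rho^2\phi_n u_1\to0$.

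An expansion argument can work only if you apply it to the linearized equation itself. Testing $-\Delta\phi_n=\rho^2c_n\phi_n$ against $u_1$ gives $\int\rho^2(c_n-\lambda_1)\phi_n u_1=0$. Now divide by $p_n-1$, write $c_n=p_n\xi_n^{p_n-1}$, and use that $\lambda_1^{-1/(p_n-1)}\xi_n\to t_*u_1$. In the limit you get $\int\rho^2(1+\log t_*+\log u_1)u_1^2=0$. Subtracting the solvability condition $\int\rho^2(\log t_*+\log u_1)u_1^2=0$ that defines $t_*$ yields $\int\rho^2u_1^2=0$.

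This route needs two things your proposal does not carry out:
\begin{itemize}
\item a proof that the rescaled amplitude converges to $t_*$ for \emph{every} solution;
\item a dominated-convergence argument near $\partial\widetilde\Om$, where $\log u_1\to-\infty$.
\end{itemize}
Your implicit-function fallback only restates this same nondegeneracy, at a point where the linearized operator has a kernel.

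\textbf{The missing idea is the paper's Step 1}, and it closes the argument at once. Green's identity gives $\int\rho^2u_nw_n(u_n^{p_n-1}-w_n^{p_n-1})=0$. So two distinct solutions cannot be ordered, and $u_n-w_n$ must change sign. Combine this with your $C^1(\overline{\widetilde\Om})$ convergence $\phi_n\to\pm u_1$ and Hopf's lemma for $u_1$: then $\pm\phi_n>0$ in $\widetilde\Om$ for large $n$, a contradiction.

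The paper uses the sign change slightly differently, without identifying the limit of $\phi_n$. From $\sup V_n\to\lambda_1$ it shows that both normalized parts $\psi_n^{+}$ and $\psi_n^{-}$ are asymptotically minimizing for the weighted Rayleigh quotient. Hence both converge in $L^2_{\rho^2}$ to a positive first eigenfunction, which is impossible because $\psi_n^+\psi_n^-=0$. Neither route needs any expansion in $p-1$.
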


\medskip

\begin{corollary} \label{corollary_sub} 
Let $(p_n)_{n \in \N} \subset [0,1)$ with $p_n \uparrow 1$ as $n \to \infty.$ For each $n \in \N,$ let $u_n:= u_{p_n}$ be the corresponding unique positive solution of \eqref{BVP0}, with $p=p_n,$ in a convex domain $\Om \subset \S^2.$ Then $\displaystyle \frac{u_n}{\lVert u_n \rVert_{\infty}} \to u_1$ uniformly in $\Om$ as $p_n \uparrow 1.$
\end{corollary}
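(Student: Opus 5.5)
We normalize and pass to the limit by compactness. Set $w_n := u_n/\lVert u_n\rVert_\infty$, so that $0<w_n\le 1$, $\max_\Om w_n = 1$, and
\begin{equation}
-\Delta w_n = \lVert u_n\rVert_\infty^{p_n-1}\, w_n^{p_n} =: \mu_n\, w_n^{p_n} \quad \text{in } \Om, \qquad w_n = 0 \text{ on } \partial\Om .
\end{equation}
The plan is: (i) show that $\mu_n$ stays bounded above and away from zero; (ii) extract a subsequence with $w_n \to w$ in $C^{1,\alpha}(\overline\Om)$ and $\mu_n\to\mu$; (iii) identify $w=u_1$ and $\mu=\lambda_1$; (iv) use uniqueness of the limit to get convergence of the whole sequence.

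\textbf{Step (i).} Testing the equation against $u_1$ and integrating by parts gives
\begin{equation}
\lambda_1\int_\Om w_n u_1 = \mu_n \int_\Om w_n^{p_n} u_1 .
\end{equation}
Since $0<w_n\le1$ and $p_n<1$, we have $w_n^{p_n}\ge w_n$, so $\mu_n \le \lambda_1$.

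For the lower bound, the right-hand side $\mu_n w_n^{p_n}$ is bounded by $\mu_n$. Comparing $w_n$ with $\mu_n\phi$, where $\phi$ is the torsion function of \eqref{BVP1}, the maximum principle yields $1=\max w_n \le \mu_n \max\phi$. Hence $\mu_n \ge 1/\max\phi>0$.

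\textbf{Step (ii).} The right-hand sides $\mu_n w_n^{p_n}$ are uniformly bounded in $L^\infty$. Standard $W^{2,q}$ estimates on the smooth domain $\Om$ then give uniform $C^{1,\alpha}(\overline\Om)$ bounds. By Arzel\`a--Ascoli we obtain, along a subsequence, $w_n\to w$ in $C^1(\overline\Om)$ and $\mu_n\to\mu\in[1/\max\phi,\lambda_1]$. The limit satisfies $w\ge0$, $\max w = 1$ and $w=0$ on $\partial\Om$.

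\textbf{Step (iii).} This is the step I expect to be the main obstacle: passing to the limit in $w_n^{p_n}$ near $\{w=0\}$, where $s^{p}$ is not uniformly Lipschitz. Pointwise, $w_n^{p_n}\to w$: where $w>0$ this is clear, and where $w(x)=0$ we have $w_n(x)^{p_n}\le w_n(x)^{p_n}$ with $w_n(x)\to 0$ and $p_n\to1$, so the limit is $0$. The functions $w_n^{p_n}$ are bounded by $1$, so dominated convergence in the weak formulation shows that $w$ is a weak, hence classical, solution of $-\Delta w = \mu w$ with $w\ge0$ and $w\not\equiv0$. By the strong maximum principle $w>0$ in $\Om$. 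Therefore $w$ is a positive Dirichlet eigenfunction, which forces $\mu=\lambda_1$. By simplicity of $\lambda_1$ and the normalization $\max w=1$, we conclude $w=u_1$.

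\textbf{Step (iv).} Every subsequence of $(w_n)$ has a further subsequence converging uniformly to the same limit $u_1$. Hence the whole sequence satisfies $w_n\to u_1$ uniformly in $\Om$. Uniqueness of each $u_n$ (Theorem \ref{theorem_uniqueness}) is used only to make the sequence well defined.
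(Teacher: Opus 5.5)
Your proposal is correct and follows the same route as the paper's proof. You normalize by the sup norm, bound $\mu_n=\lVert u_n\rVert_\infty^{p_n-1}$ above by $\lambda_1$ by testing against $u_1$, bound it away from zero, and then use compactness and simplicity of $\lambda_1$ to identify the limit. The differences are minor: you work intrinsically on $\S^2$ rather than after stereographic projection, and your comparison $w_n\le\mu_n\phi$ with the torsion function gives the lower bound directly, where the paper argues by contradiction that $\mu_n\to0$ would force a harmonic limit with zero boundary values and sup equal to $1$.

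One slip to fix: in Step (iii) the inequality $w_n(x)^{p_n}\le w_n(x)^{p_n}$ is vacuous. Use instead the elementary bound $0\le s^{p}-s\le 1-p$ for $s\in(0,1]$, $p\in[0,1)$. It gives $\lVert w_n^{p_n}-w_n\rVert_\infty\le 1-p_n\to0$, so passing to the limit is immediate and not an obstacle at all.
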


\begin{corollary} \label{corollary_super} 
Let $(p_n)_{n \in \N} \subset (1,p_0]$ with $p_n \downarrow 1$ as $n \to \infty.$ For each $n \in \N,$ let $u_n:= u_{p_n}$ be the corresponding unique positive solution of \eqref{BVP0}, with $p=p_n,$ in a convex domain $\Om \subset \S^2.$ Then $\displaystyle \frac{u_n}{\lVert u_n \rVert_{\infty}} \to u_1$ uniformly in $\Om$ as $p_n \downarrow 1.$
\end{corollary}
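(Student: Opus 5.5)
The plan is to normalize, trap the resulting multiplier between $\lambda_1$ and a fixed constant, and then pass to the limit using elliptic compactness and the simplicity of the first eigenvalue.

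\textbf{Normalization.} Set $w_n := u_n/\lVert u_n\rVert_\infty$ and $\mu_n := \lVert u_n\rVert_\infty^{p_n-1}$. Then $w_n$ solves
\begin{equation*}
-\Delta w_n = \mu_n w_n^{p_n} \ \text{in } \Om, \qquad w_n=0 \ \text{on } \partial\Om, \qquad 0< w_n\le 1, \qquad \max_\Om w_n = 1.
\end{equation*}
It suffices to show that every subsequence has a further subsequence along which $w_n \to u_1$ uniformly. The limit $u_1$ is independent of the subsequence, so this gives convergence of the whole sequence.

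\textbf{Lower bound $\mu_n\ge \lambda_1$.} Multiply the equation for $w_n$ by $u_1$ and integrate by parts, using $u_1=w_n=0$ on $\partial\Om$. This gives
\begin{equation*}
\lambda_1\int_\Om w_n u_1 = \mu_n\int_\Om w_n^{p_n}u_1 \le \mu_n \int_\Om w_n u_1,
\end{equation*}
since $0\le w_n\le 1$ and $p_n>1$ imply $w_n^{p_n}\le w_n$. Here the inequalities go the opposite way from the sublinear case, so this step only gives a lower bound.

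\textbf{Upper bound on $\mu_n$ (main obstacle).} I expect this to be the hardest step, and I would argue by blow-up. Suppose $\mu_n\to\infty$ along a subsequence. Pick $x_n$ with $w_n(x_n)=1$ and rescale in normal coordinates at $x_n$:
\begin{equation*}
W_n(y) := w_n\bigl(\exp_{x_n}(\mu_n^{-1/2}y)\bigr).
\end{equation*}
The rescaled metric converges smoothly to the Euclidean one, and $W_n$ satisfies $-\Delta_n W_n = W_n^{p_n}$ with $0\le W_n\le 1$.

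\emph{Distance to the boundary.} The right-hand side is bounded by $1$ and $\partial\Om$ becomes flatter after rescaling. Hence boundary and interior $C^{1,\alpha}$ estimates give $|\nabla W_n|\le C$ uniformly. Since $W_n(0)=1$ and $W_n=0$ on the rescaled boundary, the rescaled distance from $0$ to the boundary is at least $c>0$.

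\emph{Passing to the limit.} Up to a subsequence, $W_n\to W$ in $C^1_{loc}$ on a limit domain $D$, which is either $\R^2$ or a half-plane. Because $s^{p_n}\to s$ uniformly on $[0,1]$, the limit satisfies $-\Delta W = W$ in $D$, $W\ge 0$, $W(0)=1$. By the strong maximum principle, $W>0$ in $D$.

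\emph{Contradiction.} Since $D$ contains arbitrarily large disks, choose a disk $B\subset D$ with $\lambda_1(B)<1$. Test the equation on $B$ against the first Dirichlet eigenfunction $\varphi_B>0$ of $B$. Integration by parts gives
\begin{equation*}
\int_B W\varphi_B = \int_B(-\Delta W)\varphi_B = \lambda_1(B)\int_B W\varphi_B - \int_{\partial B} W\,\partial_\nu\varphi_B.
\end{equation*}
The boundary term is $\ge 0$ because $W\ge 0$ and $\partial_\nu\varphi_B<0$. Therefore $(1-\lambda_1(B))\int_B W\varphi_B\le 0$, which is impossible since $\int_B W\varphi_B>0$. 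Hence $\lambda_1\le \mu_n\le C$.

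\textbf{Passage to the limit.} With $\mu_n$ bounded and $0\le w_n\le 1$, global elliptic estimates give $\lVert w_n\rVert_{C^{1,\alpha}(\overline\Om)}\le C$. Up to a subsequence, $\mu_n\to\mu\ge\lambda_1$ and $w_n\to w$ in $C^1(\overline\Om)$. Again using $w_n^{p_n}\to w$ uniformly, $w$ is a weak, hence classical, solution of $-\Delta w=\mu w$ in $\Om$ with $w=0$ on $\partial\Om$, $w\ge0$ and $\max w=1$. A nonnegative nontrivial Dirichlet eigenfunction must be a first eigenfunction, so $\mu=\lambda_1$. By simplicity of $\lambda_1$ and the normalization, $w=u_1$. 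This limit does not depend on the subsequence, so $w_n\to u_1$ uniformly in $\Om$.
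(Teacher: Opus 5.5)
Your argument is correct and has the same skeleton as the paper. The paper proves this corollary by pointing to Steps 2 and 3 in the proof of Theorem \ref{theorem_uniqueness}: a lower bound $\mu_n\ge\lambda_1$ by testing against the first eigenfunction, an upper bound by blow-up contradicted with the eigenfunction of a large disk, and then compactness plus simplicity of $\lambda_1$. The real difference is in the blow-up step. The paper passes to the weighted planar problem \eqref{eq_stereo} by stereographic projection. It then rules out concentration of the maximum points at the boundary by the moving plane method, which uses the convexity of $\widetilde\Om$ and the radial monotonicity of the weight $\rho^2$. You instead rescale intrinsically in normal coordinates and replace moving planes by a uniform $C^{1,\alpha}$ bound up to the flattening rescaled boundary. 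That bound keeps the maximum point at rescaled distance at least $c$ from the boundary, so you can accept a half-plane as limit domain. Since $-\Delta W=W$ has no positive solution on any domain containing arbitrarily large disks, this suffices. Your route is more robust: the convergence step needs neither convexity of $\Om$ nor any symmetry tool, nor the specific structure of the stereographic weight.

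One slip to fix: Green's formula gives
\[
\int_B W\varphi_B=\lambda_1(B)\int_B W\varphi_B+\int_{\partial B}W\,\partial_\nu\varphi_B ,
\]
with a plus sign, and this boundary term is $\le 0$. With the minus sign you wrote, and the boundary term declared $\ge 0$, your displayed relation would give $(1-\lambda_1(B))\int_B W\varphi_B\ge 0$, which is no contradiction. With the correct sign, the conclusion $(1-\lambda_1(B))\int_B W\varphi_B\le 0$ follows as you state, and the rest of the proof goes through.
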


\bigskip

\begin{remark}
The convexity assumption in Theorem \ref{theorem_uniqueness} is likely an artifact of the proof and could be removed. This is the case for each fixed $0 \leq p \leq 1,$ thanks to Theorem \ref{brezis_oswald} and the well-known uniqueness of the first eigenfunction up to normalization. Although our result uses the convexity the domain, we believe the uniqueness also holds for each fixed $1 < p \leq p_0.$
\end{remark}

\smallskip

\begin{proof} [Proof of Theorem \ref{theorem_uniqueness}]
First of all, by Corollary \ref{corollary_isometry} we have that $\Om$ is isometric to a domain $\widetilde \Om \subset \R^2$ endowed with the conformal metric $ \frac{4}{(1+X^2+Y^2)^2}(dX^2+dY^2)$ via stereographic projection $\pi: \Om \to \widetilde \Om.$ Moreover, $\widetilde \Om$ is a smooth bounded convex domain for the Euclidean metric $g_E.$ By Lemma \ref{lemma_stereo}, $\tilde u =  u \circ \pi^{-1} $ is a solution of \eqref{eq_stereo}. By Lemma \ref{lemma_stereo_eigen}, $\tilde u_1= u_1 \circ \pi^{-1}$ is a solution of \eqref{BVP_eigen_R2} with $\max_{\widetilde \Om} \tilde u_1 = 1.$

\medskip

We treat the proof of the cases $0\leq p<1$ and $1<p\leq p_0$ separately. For $p=1$ the uniqueness of the positive solution of \eqref{BVP_eigen} (up to scaling) is a well-known fact.\\
    
• $0\leq p<1.$ \\
Fixed an exponent $p$ on this interval, we observe that \eqref{eq_stereo} is in the form of equation \eqref{sub_brezis_oswald}, where $\rho^2(x) = \frac{4}{(1+|x|^2)^2} > 0$ for every $x = (X,Y) \subset \R^2,$ and $f(s) = s^p$, $0 \leq p < 1$ is such that $s \mapsto f(s)/s$ is decreasing. Therefore, by Proposition \ref{brezis_oswald} we have that $\tilde{u}$ is the unique positive solution of problem \eqref{sub_brezis_oswald}. In addition, $\tilde{u} \in L^\infty(\widetilde{\Om}).$ By the smoothness of the isometry $\pi,$ it follows that $u = \tilde u \circ \pi \in L^\infty(\Om)$ is the unique positive solution of problem \eqref{BVP0}.\\

\smallskip

• $ 1<p\leq p_0$ \\
\smallskip
We adapt the proof in (\cite{LIN}, Lemma 3) by considering the weight $\rho^2(x) = \frac{4}{(1+ |x|^2)^2},$ and follow these next steps. \\ 
\smallskip

\textit{Step 1. If $\tilde u_1 \not \equiv \tilde u_2$ are two solutions of \eqref{eq_stereo}, then $\tilde u_1- \tilde u_2$ changes sign.} \\

From the equation \eqref{eq_stereo} and using Green's identity, we compute
\begin{equation}
  0=  \int_{\partial \widetilde  \Om}  \tilde u_2 \frac{\partial \tilde u_1}{\partial \nu} - \tilde u_1 \frac{\partial \tilde u_2}{\partial \nu} =  \int_{\widetilde \Om} \tilde u_2 \Delta \tilde u_1 - \tilde u_1 \Delta \tilde u_2 = \int_{\widetilde \Om} \rho^2 \tilde u_1 \tilde u_2 (\tilde u_1^{p-1} - \tilde u_2^{p-1}).
\end{equation}
Suppose without loss of generality that $\tilde u_1 \geq \tilde u_2,$ which implies $\tilde u_1^{p-1} - \tilde u_2^{p-1} \geq 0.$ The integrand on the RHS must vanish, hence $\tilde u_1 \equiv \tilde u_2$ since $\rho^2 > 0$ and $\tilde u_1, \tilde u_2 >0.$ \\

\smallskip

\textit{Step 2. $M_n^{p_n - 1} = \big( \sup_{\overline{\widetilde \Om}} \tilde u_n  \big)^{p_n - 1}$ is bounded.}\\

Let $(p_n)_{n \in \N} \subset (1, p_0]$ be a sequence such that $p_n \downarrow 1,$ and for each $n \in \N,$ let $\tilde u_n$ be the corresponding solution of the equation \eqref{eq_stereo} with $p=p_n$. By standard elliptic regularity theory we have that $\tilde u_n \in L^\infty(\Om),$ hence we may set
\begin{equation} \label{aux_Qn}
    M_n := \sup_{\overline{\widetilde \Om}} \tilde u_n = \tilde u_n(Q_n) \quad \text{for some } Q_n \in \widetilde \Om.
\end{equation}

First we show that $M_n^{p_n-1}$ is bounded from below for $p_n>1$ sufficiently close to $1.$ Indeed, note $0<\rho^2_{min}:=\frac{4}{(1+R^2)^2} \leq \rho^2(x) \leq 4 =: \rho^2_{max}$ in $\widetilde \Om,$ where $R:= \sup_{x \in \widetilde \Om} |x|.$ Also, let $(\lambda_1,\tilde u_1)$ be the first Dirichlet eigenpair of the Laplacian $-\Delta$ in $\widetilde \Om$ with weight $\rho^2,$ i.e. $\tilde u_1$ is the solution of the problem \eqref{BVP_eigen_R2}.
By multiplying the equation $-\Delta \tilde u_n = \rho^2 \tilde u_n^p$ by $\tilde u_1$ and integrating by parts, we have that
\begin{equation} \label{lower1}
    \int_{\widetilde \Om} \rho^2 \,\tilde u_n^{p_n}\, \tilde u_1 = \int_{\widetilde \Om} \langle \nabla \tilde u_n, \nabla \tilde u_1 \rangle = \int_{\widetilde \Om} \tilde u_n (-\Delta \tilde u_1) = \lambda_1\int_{\widetilde \Om} \rho^2 \, \tilde u_n \, \tilde u_1. 
\end{equation}
Since $\tilde u_n^{p_n} \leq M_n^{p_n-1} \tilde u_n,$ then
\begin{equation} \label{lower2}
    \int_{\widetilde \Om} \rho^2 \, \tilde u_n^{p_n} \, \tilde u_1 \leq M_n^{p_n-1} \int_{\widetilde \Om} \rho^2 \, \tilde u_n \, \tilde u_1.
\end{equation}
Hence equations \eqref{lower1} and \eqref{lower2} imply that
\begin{equation}
    M_n^{p_n - 1} \geq \lambda_1.
\end{equation}

We now show that $M_n^{p_n - 1}$ is bounded from above for $p_n > 1$ sufficiently close to 1 by means of a blow-up argument. Suppose $M_n^{p_n - 1} \to +\infty$ as $p_n \downarrow 1,$ and set
\begin{equation*}
    u^*_n(x) := \frac{\tilde u_n(\varepsilon_n x + Q_n)}{M_n}, \quad \text{where} \; \; \varepsilon_n^2 = \frac{1}{M_n^{p_n - 1}}.
\end{equation*}

Since $\widetilde \Om$ is convex and $\rho^2$ is radially decreasing, by the method of moving planes (see \cite[Theorem 1']{GIDAS-NI-NIRENBERG}) there follows that $Q_n$ is away from the boundary $\partial \widetilde \Om.$ By standard elliptic estimates, $u^*_n \to u^* \in C^2(K)$ uniformly on each compact set $K \subset \R^2,$ and $u^*$ satisfies 
\begin{align*}
    \begin{cases}
    -\Delta u^* = \rho^2 u^*, \; u^*>0 \quad \text{in} \; \; \R^2, \\
    \;  u^*(0)  =1.
    \end{cases}
\end{align*}

Let $(\lambda_R, \tilde \varphi_R)$ be first Dirichlet eigenpair of the Laplacian $-\Delta$ in the Euclidean ball $B_R(0) \subset \R^2$ with weight $\rho^2,$ i.e. $\tilde \varphi_R$ is the solution of the eigenvalue problem 
    \begin{align} \label{BVP_eigen_ball}
        \begin{cases}
            -\Delta \tilde \varphi_R = \lambda_R \rho^2 \tilde \varphi_R \quad &\text{in} \; \; B_R(0) \subset \R^2, \\
            \qquad \tilde \varphi_R = 0 \quad &\text{on} \; \;  \partial B_R(0).
        \end{cases}
    \end{align}
Let us multiply the equation $-\Delta \tilde u = \rho^2 \tilde u^p$ by $\tilde \varphi_R,$ subtract it from the multiplication of equation \eqref{BVP_eigen_ball} by $u^*,$ and then integrate by parts. If $R$ is sufficiently large, then $\lambda_R <1.$ It follows that
\begin{align*}
    0 > \int_{\partial B_R (0)} u^* \rho^2 \frac{\partial \tilde \varphi_R}{\partial \nu}  = \int_{B_R (0)} u^* \Delta \tilde \varphi_R - \tilde \varphi_R \Delta u^*  = (1- \lambda_R) \int_{B_R(0)} u^* \rho^2 \tilde \varphi_R > 0,
\end{align*}
which is a contradiction.\\

\smallskip

\textit{Step 3. $M_n^{p_n - 1} \to \lambda_1.$} \\

Set $\overline{u}_n :=\displaystyle\frac{\tilde u_n}{M_n},$ which then satisfies the equation
\begin{align} \label{equation_ubar}
    \begin{cases}
        -\Delta \overline{u}_n = M_n^{p_n - 1} \rho^2 \, \overline{u}_n^{p_n} \quad &\text{in} \; \; \widetilde \Om, \\
        \quad \quad \, \overline{u} = 0 \; \; &\text{on} \; \partial \widetilde \Om.
    \end{cases}
\end{align}

By \textit{Step 2,} $M_n^{p_n-1}$ is uniformly bounded as $p_n \downarrow1.$ Hence up to a subsequence, we may assume $M_n^{p_n - 1} \to \lambda$ as $p_n \downarrow 1.$ Note that $\rho^2(x)$ is bounded for every $x \in \widetilde \Om,$ and also that $0 \leq \overline{u}_n \leq 1$ in $\Om$ by definition. Then for $p_n \in (1, 1+ \varepsilon),$ $\varepsilon>0,$ we have that $M_n^{p_n - 1} \rho^2 \, \overline{u}_n^{p_n}$ is uniformly bounded in $L^\infty(\Om).$ By standard elliptic estimates, $\overline{u}_n$ is uniformly bounded in $C^{2, \alpha}(\overline{\widetilde \Om}).$ Thence by Arzelà-Ascoli, up to a subsequence it follows that $\overline{u}_n \to \overline{u} \in C^2(\widetilde \Om) \cap C^0(\overline{\widetilde \Om})$ uniformly as $p_n \downarrow 1$. Passing to the limit in equation \eqref{equation_ubar}, we deduce that $\overline{u}$ satisfies 
\begin{align} \label{BVP_eigen2}
    \begin{cases}
        -\Delta \overline{u} = \lambda \rho^2 \overline{u}, \quad &\text{in} \; \, \Om, \\
        \;0 \leq \overline{u} \leq 1 \quad &\text{in} \; \, \Om, \\
        \quad  \; \; \, \overline{u} = 0 \quad &\text{on} \; \, \partial\Om.
    \end{cases}
\end{align}
Then $\overline{u} \geq 0,$ $\overline{u} \not \equiv 0$ gives that $\overline{u}>0$ by the strong maximum principle. By the variational characterization of the first eigenvalue, the eigenvalue problem \eqref{BVP_eigen2} admits a positive eigenfunction $\overline u$ if and only if $\lambda = \lambda_1.$ Moreover, the first eigenvalue $\lambda_1$ is simple, hence $\overline u$ must be a positive multiple of $\tilde u_1.$ Since $0 \leq \overline u \leq 1$ and $\tilde u_1$ is the first eigenfunction satisfying $\lVert \tilde u_1 \rVert_\infty =1$, we deduce that $\overline{u} =\tilde u_1.$\\
\smallskip

\textit{Step 4.} If  $\tilde u_n \not \equiv \tilde v_n$ are two solutions of \eqref{eq_stereo} with $p=p_n,$ $p_n \downarrow 1,$ then $\sup_{\widetilde \Om} \frac{\tilde u_n^{p_n} - \tilde v_n^{p_n}}{\tilde u_n - \tilde v_n} \to \lambda_1.$\\

Set $M_n := \displaystyle \sup_{\overline{\widetilde \Om}} \tilde u_n,$ $W_n := \displaystyle \sup_{\overline{\widetilde \Om}}\tilde v_n$ and $\overline u_n = \frac{\tilde u_n}{M_n},$ $\overline v_n = \frac{\tilde v_n}{W_n}.$ We proceed with the following argument for $\overline u_n$. By \textit{Step 3}, up to a subsequence, we have that $M_n^{p_n - 1} \to \lambda_1$ uniformly. Moreover, $\overline{u}_n = \frac{\tilde u_n}{M_n} \to \overline{u}$ uniformly in $\widetilde \Om,$ with $\overline u > 0$ on each compact $K \Subset \widetilde \Om.$ By writing $\overline{u}_n = \exp\big( (p_n - 1)\log \overline u_n \big),$ we note that $\overline u_n \geq c_K$ on $K$ for some constant $c_K > 0.$ Then $\log \overline u_n$ is uniformly bounded on $K$ and $(p_n - 1) \log \overline u_n \to 0$ uniformly on $K,$ and thus $\overline{u}_n^{p_n - 1} \to 1.$ Therefore $\tilde u_n^{p_n - 1} = M_n^{p_n - 1} \overline{u}_n^{p_n - 1} \to \lambda_1$ uniformly on each compact $K \Subset \widetilde \Om.$ By repeating the above argument for $\overline v_n$ we also get $\tilde v_n^{p_n - 1} \to \lambda_1$ uniformly on each compact $K \Subset \widetilde \Om.$ 

By the mean value theorem, for each $x \in \widetilde \Om$ we may write
\begin{equation}\label{Vn_integral}
V_n(x)=p_n \int_0^1\bigl(t\tilde u_n(x)+(1-t)\tilde v_n(x) \bigr)^{p_n-1}dt.
\end{equation}
For each $x \in \widetilde \Om,$ by $0 \leq \tilde u_n(x) \leq M_n$ and $0 \leq \tilde v_n(x) \leq W_n,$ we have that $0 \leq t\tilde u_n(x)+(1-t)\tilde v_n(x) \leq \max \{ M_n, W_n\}.$ The map $s \mapsto s^{p_n-1}$ is strictly increasing since $p_n > 1,$ hence $V_n(x) \leq p_n \max \{ M_n^{p_n - 1}, W_n^{p_n - 1}\}.$ Since $M_n^{p_n -1}\to \lambda_1$ and $W_n^{p_n -1} \to \lambda_1$ as $p_n \uparrow 1,$ by taking the supremum over $\widetilde \Om$ and passing to the limit,
\begin{equation} \label{limsupVn}
\limsup_{n \to \infty}  \sup_{\widetilde \Om} V_n \leq \lambda_1.
\end{equation}

On the other hand, let $Q_n \in \widetilde \Om$ be such that \eqref{aux_Qn} holds. At $Q_n,$ \eqref{Vn_integral} reads $V_n(Q_n) = p_n \int_0^1\bigl(t\tilde M_n+(1-t)\tilde v_n(Q_n) \bigr)^{p_n-1}dt.$
Since $\tilde v_n(Q_n) \geq 0,$ this last integrand is bounded from below by $t M_n,$ whilst again by the fact that the map $s \mapsto s^{p_n-1}$ is strictly increasing, we get $V_n(Q_n) \geq p_n M_n^{p_n - 1} \int_0^1 t^{p_n - 1} dt= M_n^{p_n - 1}.$ Thus
\begin{equation} \label{liminfVn}
\lambda_1 \leq \liminf_{n \to \infty}  \sup_{\widetilde \Om} V_n.
\end{equation}

Then by putting together the bounds \eqref{limsupVn} and \eqref{liminfVn}, one arrives at
\begin{equation} \label{supVn}
\sup_{\Om} V_n \to \lambda_1 \quad \text{as } p_n \uparrow 1.
\end{equation}\\

\smallskip

\textit{Step 5.} Conclusion. \\

To conclude the proof, suppose that the claim  is false and let $\tilde u_n \not \equiv \tilde v_n$ be two solutions of \eqref{eq_stereo} with $p=p_n,$ $p_n \downarrow 1.$  Let us define
 
\begin{equation*}
    \psi_n := \frac{\tilde u_n - \tilde v_n}{\lVert \tilde u_n-\tilde v_n \rVert_{\infty}},
\end{equation*}\\ \smallskip
which changes sign (by \textit{Step 1}) and satisfies
\begin{align*}
    \begin{cases}
        -\Delta \psi_n = V_n(x) \rho^2(x) \psi_n \quad &\text{in} \; \, \widetilde \Om \subset \R^2, \\
        \quad \; \, \, \psi_n = 0 \quad &\text{on} \; \, \partial \widetilde \Om.
    \end{cases}
\end{align*}

Let $\psi_n^{+}$ and $\psi_n^{-}$ denote respectively the positive and negative parts of $\psi_n,$ which are both nontrivial. In what follows, we write $\psi^{\pm}$ to indicate that the subsequent computations apply to each of these functions separately. By multiplying the equation above by $\psi_n^{\pm}$ and integrating by parts, one has
\begin{equation} \label{aux_Vn}
\int_{\widetilde \Om} |\nabla \psi_n^{\pm}|^2 = \int_{\widetilde \Om} V_n(x) \rho^2(x) (\psi_n^{\pm})^2.
\end{equation} 
By the characterization of the first eigenvalue $\lambda_1$, and by using \eqref{supVn} from \textit{Step 4} and equation \eqref{aux_Vn}, we have that
\begin{equation} \label{aux_wn}
\lambda_1 \leq \frac{\int_{\widetilde \Om} |\nabla \psi_n^{\pm}|^2}{ \int_{\widetilde \Om} \rho^2 (\psi_n^{\pm})^2} = \frac{  \int_{\widetilde \Om} V_n(x) \rho^2(x) (\psi_n^{\pm})^2}{ \int_{\widetilde \Om} \rho^2 (\psi_n^{\pm})^2} \leq \sup_{\widetilde \Om} V_n \to \lambda_1.
\end{equation}

Now set
\begin{equation*}
w_n^{\pm} := \displaystyle\frac{\psi_n^{\pm}\; \; \; \,}{ \lVert \psi_n^{\pm} \rVert_{L^2_{\rho^2}}},
\end{equation*}
where $\lVert f \rVert_{L^2_{\rho^2}} := \left( \int_{\widetilde \Om} \rho^2(x) f^2 \right)^{1/2}$ denotes the norm in the weighted Lebesgue space 
$$L^2_{\rho^2}(\widetilde \Om) := \{ f: \widetilde \Om \to \R \; \; \text{is a measurable function } \Big| \int_{\widetilde \Om} \rho^2(x) f^2  < \infty \}.$$
Note that the norms $\lVert \psi_n^{\pm} \rVert_{L^2_{\rho^2}}$ and $\lVert \psi_n^{\pm} \rVert_{L^2}$ are equivalent, as $0<\rho^2(x) \leq 4$ for every $x \in \widetilde \Om.$ Also, $w_n^{\pm}$ is well-defined since $\lVert \psi_n^{\pm} \rVert_{L^2_{\rho^2}} > 0.$ By equation \eqref{aux_wn} there holds $\int_{\widetilde \Om} |\nabla w_n^{\pm}|^2 \to \lambda_1,$ hence $(w_n^{\pm})$ is a bounded sequence in $H_0^1(\Om).$ Then up to subsequences,  
\begin{equation*}
w_n^{\pm} \rightharpoonup w^{\pm} \quad \text{weakly in } H_0^1(\widetilde \Om), \qquad
w_n^{\pm} \to w^{\pm} \quad \text{strongly in } L^2_{\rho^2}(\widetilde \Om).
\end{equation*}
Since $\lVert w_n^{\pm} \rVert_{L^2_{\rho^2}} = 1,$ then $\lVert w^{\pm} \rVert_{L^2_{\rho^2}} = 1.$ By the weak lower semicontinuity and the characterization of the first eigenvalue $\lambda_1$, one has
\begin{equation*}
\int_{\widetilde \Om} |\nabla w^{\pm}|^2 \leq \lim_{n \to \infty} \inf \int_{\widetilde \Om} |\nabla w_n^{\pm}|^2 = \lambda_1 = \lambda_1 \int_{\widetilde \Om} \rho^2 (w^{\pm})^2 \leq \int_{\widetilde \Om} |\nabla w^{\pm}|^2,
\end{equation*} 
thus equality holds and $w^\pm$ achieves the minimum in the Rayleigh quotient $\lambda_1= \inf_{\varphi \in H_0^1(\widetilde \Om \setminus \{0\})} \frac{\int_{\widetilde \Om} |\nabla \varphi|^2}{\rho^2(x) \varphi^2}.$ From $w_n^{\pm} \geq 0$ we have $w^\pm \geq 0,$ whilst by the simplicity of the first eigenvalue $\lambda_1$ and the strong maximum principle, we also have
\begin{equation*}
w_n^+ \to u_1 \quad \text{in } L^2_{\rho^2}(\widetilde \Om), \qquad 
w_n^- \to u_1 \quad \text{in } L^2_{\rho^2}(\widetilde \Om).
\end{equation*}
Furthermore, let us observe that $w_n^+ w_n^- = 0$ almost everywhere in $\widetilde \Om,$ whence 
\begin{equation} \label{aux_wn+-}
\int_{\widetilde \Om} \rho^2(x) w_n^+ w_n^- = 0 \quad \text{for every } n \in \N.
\end{equation} 
Finally, by recalling that $\rho^2 > 0,$ we pass to the limit in equation \eqref{aux_wn+-} to obtain
\begin{equation*}
0 = \int_{\widetilde \Om} \rho^2(x) u_1^2,
\end{equation*}
contradicting the positivity of the first eigenfunction $u_1.$ This concludes the proof. 
\end{proof} 

\begin{proof} [Proof of Corollary \ref{corollary_sub}]
It suffices to show that
\begin{equation}
\displaystyle \frac{\tilde u_n}{\lVert \tilde u_n \rVert_{\infty}} \to \tilde u_1  \quad \text{uniformly in } \widetilde \Om \subset \R^2,
\end{equation}
where $\tilde u_n := u_n \circ \pi^{-1},$ $\tilde u_1 :=  u_1 \circ \pi^{-1}$  and $\pi$ is the stereographic projection $\pi: \Om \to \widetilde \Om \subset \R^2.$ The argument closely parallels \textit{Steps 2} and \textit{3} in the proof of the superlinear case in Theorem \ref{theorem_uniqueness}. We indicate the relevant modifications. 

Let $M_n := \sup_{\overline {\widetilde \Om}} \tilde u_n = \tilde u_n(Q_n)$ for some $Q_n \in \widetilde \Om$ and let $(\lambda_1, \tilde u_1)$ be the first Dirichlet eigenpair of $-\Delta$ in $\Om$ with weight $\rho^2$ (see equation \eqref{BVP_eigen_R2}). Multiplying $-\Delta \tilde u_n = \tilde u_n^p$ by $\tilde u_1$ and integrating by parts, and then using that $\tilde u_n^{p_n} \geq M_n^{p_n - 1} \tilde u_n,$ we obtain $M_n^{1-p_n} \geq \frac{1}{\lambda_1}.$ Thus $M_n^{1-p_n}$ is bounded from below. Now suppose that $M_n^{1-p_n} \to +\infty$ and set $\overline u_n = \frac{\tilde u_n}{M^n},$ so that $0 \leq \overline u_n \leq 1$ and $\overline u_n$ satisfies equation \eqref{equation_ubar}. Note that $\lVert \overline u_n \rVert_\infty = 1,$ $\rho^2$ is bounded in $\widetilde \Om$ and $M_n^{p_n - 1} \to 0,$ hence $\lVert M_n^{p_n - 1} \rho^2 \overline u_n^{p_n} \rVert_\infty \to 0.$ By elliptic regularity, the sequence $(\overline u_n)_{n \in \N}$ is bounded in $C^{2, \alpha}(\overline{\widetilde \Om}),$ whence by Arzelà-Ascoli we have that $\overline u_n \to \overline u \in C^2(\widetilde \Om) \cap C^0 (\overline{\widetilde \Om}),$ up to a subsequence. Passing to the limit in equation \eqref{BVP_eigen_R2}, we thus obtain that $\overline u$ satisfies
\begin{align*}
    \begin{cases}
        -\Delta \overline u = 0 \quad &\text{in} \; \, \widetilde \Om, \\
        \quad \; \, \, \overline u = 0 \quad &\text{on} \; \, \partial \widetilde \Om.
    \end{cases}
\end{align*}
By the strong maximum principle, $\overline u \equiv 0.$ But $\lVert \overline u \rVert_\infty = 1,$ by the uniform convergence $\overline u_n \to \overline u.$ This gives a contradiction. Thus $M_n^{1-p_n}$ is also bounded from above. It is then straightforward to push through the same compactness argument done in \textit{Step 3} in the proof of Theorem \ref{theorem_uniqueness}, and the result follows.
\end{proof}

\begin{proof} [Proof of Corollary \ref{corollary_super}]
It follows directly from \textit{Steps 2} and \textit{Step 3} for the superlinear case in the proof Theorem \ref{theorem_uniqueness}.
\end{proof}

\section{Full rank of $\nabla^2 \left( u^{\frac{1-p}{2}} \right)$ near the boundary} \label{section_concavity_boundary}
This Section is devoted to the proof of Lemma \ref{lemma_boundary_rank} below, which proves that the conclusions \ref{sublinear} and \ref{superlinear} of Theorem \ref{power} hold provided one is sufficiently close to the boundary. This constitutes the ingredient \ref{ing3} needed for the proof of Theorem \eqref{power}, and sets the stage for the constant rank theorem in the next section.

In this section we use the following notation: for $\delta>0$ sufficiently small,
\begin{equation} \label{tubular_neighborhood}
\boxed{
\Om_{\delta}:= \{ x \in \Om \; \big| \; d(x, \partial \Om) < \delta \} \; \; \text{is a tubular neighborhood of $\partial \Om.$}
}
\end{equation} 

 We will work in Fermi coordinates near the boundary. Let $y(s)$ be a parametrization by arclength $s$ of $\partial \Om$ and let $\tau(s):= y'(s)$ be the unit tangent to $\partial \Om.$ We choose the orientation of $y(s)$ so that the geodesic curvature of $\partial \Om$ is positive with respect to the outward unit normal $\nu(s).$ Define 
\begin{align}
	X: \partial \Om \times [0, \delta) &\to \Om_\delta \\
	(s,d) &\mapsto \exp_{y(s)} \left( -\nu(s) d \right).
\end{align}
Here $d:= d(x, \partial \Om).$ The map $d \mapsto X(s,d)$ is the inward normal geodesic passing through $y(s) \in \partial \Om.$ We extend the unit tangent $\tau(s)$ and the unit inward normal $-\nu(s)$ along this geodesic via parallel transport, i.e. $-\nu(s,t):= X_d(s,d)$ and $\tau(s,d):= \frac{X(s,d)}{|X_s(s,d)|},$ where $|X_d(s,d)|= \cos d - \kappa_g(s) \sin d.$ For $\delta > 0$ sufficiently small, $X$ is a diffeomorphism onto $\Om_\delta,$ hence every $x \in \Om_\delta$ can be written as $x = X(s,d)$ for a unique boundary point $y(s)$ and a unique $d \in [0, \delta).$ At each $x \in \Om_\delta$ we have an orthonormal Fermi frame $\{\tau,-\nu \}:= \left\{ \frac{X(s,d)}{|X_s(s,d)|}, X_d(s,d) \right\}.$

\smallskip

\begin{lemma} \label{lemma_boundary_rank}
Let $\Om \subset \S^2$ be a uniformly convex domain with smooth boundary and let $u: \overline{\Om} \to \R$ be a solution of the Dirichlet problem \eqref{BVP0}. Then for a sufficiently small $\delta > 0,$ 
\begin{align}
v&=u^{\frac{1-p}{2}}  \text{ is strictly concave in $\Om_\delta$ for $0 \leq p < 1,$} \\
v&=u^{\frac{1-p}{2}} \text{ is strictly convex in $\Om_\delta$ for $p>1.$}
\end{align}
\end{lemma}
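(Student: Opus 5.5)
We compute the covariant Hessian of $v=u^{\frac{1-p}{2}}$ in the Fermi frame $\{\tau,-\nu\}$ and show that both eigenvalues have a definite sign near $\partial\Om$. Write $\alpha:=\frac{1-p}{2}$. Then
\begin{equation*}
\nabla^2 v=\alpha u^{\alpha-2}\Big(u\,\nabla^2 u+(\alpha-1)\,\nabla u\otimes\nabla u\Big).
\end{equation*}
Since $\alpha$ is positive for $p<1$ and negative for $p>1$, it suffices to show that the symmetric matrix $A:=u\nabla^2u+(\alpha-1)\nabla u\otimes\nabla u$ is negative definite in $\Om_\delta$ for every $p\ne1$. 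Note that $\alpha-1=-\frac{1+p}{2}<0$ for all $p\geq0$.

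\emph{Boundary behaviour.} By the Hopf lemma, $\partial_\nu u\le -c_0<0$ on $\partial\Om$, and $u\in C^\infty(\overline\Om)$. In the frame $\{\tau,-\nu\}$, write $u_{\nu}:=\partial_{-\nu}u>0$ near $\partial\Om$, so that $\nabla u=u_\nu(-\nu)$ on $\partial\Om$. The entries of $A$ then behave as follows.
\begin{itemize}
\item The $(-\nu,-\nu)$ entry is $u\,u_{\nu\nu}+(\alpha-1)u_\nu^2\to(\alpha-1)u_\nu^2<0$ as $d\to0$.
\item The mixed entry is $u\,u_{\tau\nu}+(\alpha-1)u_\tau u_\nu$. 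Since $u_\tau=0$ on $\partial\Om$ and $u=O(d)$, this entry is $O(d)$.
\item For the tangential entry, use $u=0$ on $\partial\Om$: there $\nabla^2_{\tau,\tau}u=-\kappa_g\,\partial_\nu u=-\kappa_g u_\nu$, with the sign chosen according to the convention $\kappa_g=-\langle\nabla_\tau\tau,\nu\rangle>0$. By Taylor expansion in $d$, $u_{\tau\tau}=-\kappa_g u_\nu+O(d)$ and $u_\tau=O(d)$. Hence $A_{\tau\tau}=u\,u_{\tau\tau}+(\alpha-1)u_\tau^2=-\kappa_g u_\nu u+O(d^2)$, with $u\sim u_\nu d$.
\end{itemize}

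\emph{Refined tangential expansion.} The bound $A_{\tau\tau}=-\kappa_g u_\nu^2 d+O(d^2)$ requires the second-order terms to be controlled. We expand $u(s,d)=a(s)d+b(s)d^2+O(d^3)$ with $a=u_\nu|_{\partial\Om}\ge c_0$. In Fermi coordinates $u_\tau=\partial_s u/|X_s|=a'(s)d+O(d^2)$, so $(\alpha-1)u_\tau^2=O(d^2)$. Also $u\,u_{\tau\tau}=(ad)(-\kappa_g a+O(d))$. Together these give $A_{\tau\tau}=-\kappa_g a^2 d+O(d^2)\le-\tfrac12\kappa_0c_0^2 d$ for $d$ small. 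This is where the uniform convexity \eqref{uniform} is essential: we need $\kappa_g\ge\kappa_0>0$ uniformly along $\partial\Om$.

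\emph{Determinant estimate.} We then have
\begin{equation*}
\det A=A_{\tau\tau}A_{\nu\nu}-A_{\tau\nu}^2\ge \tfrac12\kappa_0 c_0^2 d\cdot\tfrac12(1-\alpha)c_0^2 - Cd^2>0
\end{equation*}
for $0<d<\delta$ small. Since also $A_{\nu\nu}<0$, the matrix $A$ is negative definite in $\Om_\delta$. Multiplying by $\alpha u^{\alpha-2}$ gives strict concavity of $v$ for $p<1$ and strict convexity for $p>1$ (the case $p=0$ is included). The main obstacle is the mixed term: a crude bound $A_{\tau\nu}=O(d)$ gives $A_{\tau\nu}^2=O(d^2)$, which is only barely dominated by the $O(d)$ product. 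One must verify honestly that $A_{\tau\nu}=O(d)$ and not larger. This follows from $u_\tau=O(d)$ and $u=O(d)$ with $u_{\tau\nu}$ bounded, using the compactness of $\partial\Om$ to make all constants uniform in $s$.
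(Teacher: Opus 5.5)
Your argument is correct and is essentially the paper's proof: the same Fermi-frame expansion of $\nabla^2 v=\alpha u^{\alpha-1}\nabla^2u+\alpha(\alpha-1)u^{\alpha-2}\,du\otimes du$, with the normal entry of exact order $d^{\alpha-2}$, the tangential entry of order $d^{\alpha-1}$ coming from $u_{\tau\tau}=-\kappa_g u_\eta$ on $\partial\Om$ together with uniform convexity, the mixed entry $O(d^{\alpha-1})$, and Sylvester's criterion; factoring out $\alpha u^{\alpha-2}$ is only a repackaging. Two small remarks: with your convention $u_\nu=\partial_{-\nu}u$ the boundary identity should read $\nabla^2_{\tau,\tau}u=\kappa_g\,\partial_\nu u=-\kappa_g u_\nu$ (your final formula is right, the intermediate sign is not), and you rightly use only the boundedness of $u_{\nu\nu}$, which matters because for $p=0$ one has $u_{\nu\nu}=\kappa_g u_\nu-1$ on $\partial\Om$, and this can be negative.
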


\smallskip

\begin{remark}
We emphasize that the condition $1<p\leq 3$ does not enter in this boundary argument. In fact, for $\delta>0$ sufficiently small and for every $p>1,$ the strict convexity of $u^{\frac{1-p}{2}}$ holds in $\Om_\delta,$  for every $p>1.$
\end{remark}

\begin{proof}[Proof of Lemma \ref{lemma_boundary_rank}]
Denote $\eta(s) := -\nu(s).$ By Hopf Lemma \cite{HOPF}, we have $u_\eta (y(s)) > 0$ for $y(s) \in \partial \Om.$ The map $y(s) \mapsto u_\eta\left( y(s) \right)$ is continuous and $\partial \Om$ is compact, hence 
 \begin{equation} \label{u_eta_boundary}
a_0:= \min_{y(s) \in \partial \Om} u_\eta\left(y(s) \right) > 0. 
\end{equation}

Let $u$ be a solution of \eqref{BVP0} for a fixed $p \in [0,1) \cup (1,\infty).$ Also, let $y(s) \in \partial \Om$ and, for $\delta>0$ sufficiently small, let $x=X(s,d) \in \Om_\delta$ denote the point at a distance $d:=d(x)$ from $y(s)$ and lying on the inward normal geodesic. Set $U(s,d):= u(X(s,d)).$ Since $u\big|_{\partial \Om} = 0,$ $X(s,0)=y(s)$ and $X_d(s,0) = \eta(s),$ then $U(s,0) = 0$ and $U_d(s,0) = u_\eta(y(s)).$ By Taylor's theorem, we have $U(s,d) = u_\eta(y(s)) d + O(d^2).$ We thus obtain
\begin{align}
u(x) &= u_{\eta}(y(s)) d + O(d^2) = O(d), \label{u_estimate} \\
u_\tau(x) &= \frac{1}{a(s,d)} U_s(s,d) = O(d), \label{ut_estimate} \\
u_\eta(x)&= u_\eta(y(s)) + O(d) = O(1). \label{un_estimate}
\end{align}
Moreover, after possibly taking $\delta>0$ smaller, \eqref{u_eta_boundary} yields
\begin{equation}
u_\eta(x) \geq \frac{a_0}{2} > 0.
\end{equation}

We now observe that for $y(s) \in \partial \Om$ we have $u_\tau (y(s)) = 0,$ while $0=\frac{d^2}{ds^2}u\left( y(s) \right) = \nabla^2 u(\eta, \eta) \left( y(s) \right) +  \langle \nabla u \left( y(s) \right), \nabla_\tau \tau \rangle = u_{\tau\tau}\left( y(s) \right)+\kappa_g(s) u_\eta \left( y(s)\right)$ gives that $u_{\tau \tau}(y(s)) = -\kappa_g(s) u_\eta (y(s)).$ Also, by the uniform convexity of $\Om$ we have that $\kappa_g(s)$ satisfies \eqref{uniform}, and $u \in C^\infty(\overline \Om)$ implies that $u_{\tau \tau}$ is continuous up to the boundary. Therefore, for $\delta > 0$ chosen sufficiently small, for every $x \in \Om_\delta$ we obtain 
\begin{align}
u_{\tau \tau}(x) &= -\kappa_g(s) u_\eta (y(s)) + O(d) = O(1), \label{u_tt_estimate} \\
u_{\tau \tau}(x) &\leq - \frac{\kappa_0 a_0}{2} < 0. \label{u_tt_lead}
\end{align}

For $x \in \Om_\delta,$ equation \eqref{BVP0} writes as $\Delta u(x) = u_{\eta \eta}(x) + u_{\tau \tau}(x) = - u^p(x)$ in the orthonormal frame $\{\eta, \tau \},$ and so for small enough $\delta>0,$ \eqref{u_estimate} and \eqref{u_tt_estimate} yield
\begin{align}
u_{\eta \eta}(x) &= \kappa_g(s) u_\eta(y(s)) + O(d) = O(1), \label{u_nn_estimate}\\
u_{\eta \eta}(x) &\geq \frac{\kappa_0 a_0}{2} > 0. \label{u_nn_lead}
\end{align}
Moreover, by differentiating $\langle \eta, \tau \rangle = 0$ in the tangential direction $\tau$ and using that $\langle \eta, \nabla_\tau \tau \rangle = \kappa_g,$ we get $\nabla_\tau \eta = -\kappa_g \tau.$ Since $u_\tau = 0$ on $\partial \Om,$ we have $\nabla u = u_\eta \eta$ on $\partial \Om.$ Hence $\langle \nabla u,\nabla_\tau\eta \rangle = \langle u_\eta\eta,-\kappa_g \tau \rangle=0.$ This yields $u_{\eta\tau}\left(y(s)\right) = \frac{d}{ds}\bigl(u_\eta(y(s))\bigr) -\langle\nabla u,\nabla_\tau\eta\rangle = \frac{d}{ds}\bigl(u_\eta(y(s))\bigr).$ By recalling that $u$ is smooth in $\overline \Om,$ that $y(s)$ is a smooth parametrization of $\partial \Om$ and that $\eta$ is smooth along $\partial \Om,$ we have that $u_\eta\left( y(s) \right)$ is smooth on $\partial \Om$ (which is compact). Thus  $\| \frac{d}{ds} \bigl(u_\eta(y(s))\bigr) \| \leq C$ for some $C>0,$ and by Taylor's theorem we obtain that
\begin{equation}
u_{\eta \tau}(x) = u_{\eta \tau}\left(y(s)\right) + O(d) = O(1).
\end{equation}

Now let $v:=u^\alpha,$ $\alpha = \displaystyle \frac{1-p}{2}.$ Then 
\begin{equation}
\nabla^2 v = \alpha u^{\alpha - 1} \nabla^2 u + \alpha (\alpha-1) u^{\alpha -2} du \otimes du,
\end{equation}  
and we estimate the components of $\nabla^2 v$ in the orthonormal frame $\{\tau, \eta\}$ at $x \in \Om_\delta$ below. We use the notation $h(x) \asymp k(x)$ to say that there exist constants $c, C > 0$ such that $c k(x) \leq h(x) \leq C k(x).$ In the next, $c,C>0$ depend only on $\alpha,$ $\delta>0,$ and on the lower bound $\kappa_0$ on the geodesic curvature of $\partial \Om.$

• $ 0 \leq p < 1$\\
By the estimates for $u(x)$ with $x \in \Om_\delta$ for some $\delta>0$ sufficiently small, we have
\begin{align}
v_{\eta \eta}(x) &\asymp \alpha(\alpha-1) d^{\alpha -2} < 0, \quad v_{\eta \eta}(x) = O(d^{\alpha -2}), \\
v_{\tau \tau}(x) &\asymp \alpha(\alpha-1) d^{\alpha -1} < 0, \quad v_{\tau \tau}(x) = O(d^{\alpha -1}), \\
v_{\tau \eta}(x) &= O(d^{\alpha - 1}).
\end{align}

Thus there exists $\delta>0$ such that, for every $x \in \Om_\delta,$ there holds $v_{\eta \eta}(x) < 0$ and $v_{\eta \eta}(x) v_{\tau \tau}(x) - v_{\eta \tau}^2(x) > 0.$ Thus $v$ is strictly concave in $\Om_\delta,$ by Sylvester's criterion.

• $ p > 1$\\
Again by the estimates for $u(x)$ with $x \in \Om_\delta$ for some $\delta>0$ sufficiently small, 
\begin{align}
v_{\eta \eta}(x) &\asymp \alpha(\alpha-1) d^{\alpha -2} > 0, \quad v_{\eta \eta}(x) = O(d^{\alpha -2}), \\
v_{\tau \tau}(x) &\asymp \alpha(\alpha-1) d^{\alpha -1} > 0, \quad v_{\tau \tau}(x) = O(d^{\alpha -1}), \\
v_{\tau \eta}(x) &= O(d^{\alpha - 1}).
\end{align}
Then there exists $\delta>0$ such that, for every $x \in \Om_\delta,$ we have $v_{\eta \eta}(x) > 0$ and $v_{\eta \eta}(x) v_{\tau \tau}(x) - v_{\eta \tau}^2(x) > 0.$ Then $v$ is strictly convex in $\Om_\delta.$
\end{proof}

\section{A Constant Rank Hessian result} \label{section_constant_rank}

In this Section we establish ingredient \ref{ing4} needed to prove Theorem \ref{power}: a constant rank theorem for the covariant Hessian of $u^{\frac{1-p}{2}},$ where $u$ is a solution to problem \eqref{BVP0} with $p \in [0,1) \cup (1,3].$ Our proof was inspired by (Theorem 1.1, \cite{CAFFARELLI-FRIEDMAN}) and (Theorem 1, \cite{KOREVAAR-LEWIS}). We recall that $v=u^{\frac{1-p}{2}}$ satisfies 
\begin{align}
\boxed{
\Delta v = \frac{1}{v} \left( -\frac{1+p}{1-p} |\nabla v|^2 - \frac{1-p}{2}\right) \quad \text{in } \Om \subset \S^2.
}  \label{deltav_sub2}
\end{align}

\begin{theorem} \label{MAIN}
Let $\Om  \subset \S^2$ be a domain with smooth boundary and let $u \in C^\infty(\overline \Om)$ be a solution of problem \eqref{BVP0}. Let $v: \overline \Om \to \R$ be given by $v:= u^{\frac{1-p}{2}},$ and denote by $\nabla^2 v$ its covariant Hessian. Suppose that,
\begin{enumerate}[label=(\alph*)]
\item for $0\leq p <1,$ $\nabla^2 v$ is negative semidefinite in $\Om;$ \label{negsemi_sub}
\item for $1 < p \leq 3,$ $\nabla^2 v$ is positive semidefinite in $\Om.$ \label{possemi_super}
\end{enumerate}
Then $\nabla^2 v$ has constant rank in $\Om.$
\end{theorem}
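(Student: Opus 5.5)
Since $\Om$ is two-dimensional, the rank of $\nabla^2 v$ can only be $0$, $1$ or $2$. I would first exclude rank $0$ using the equation \eqref{deltav_sub2} itself. For $0\le p<1$ both $-\frac{1+p}{1-p}|\nabla v|^2$ and $-\frac{1-p}{2}$ are nonpositive, the second one strictly negative. Since $v>0$, this gives $\Delta v<0$ in $\Om$. For $1<p\le 3$ both coefficients change sign, so $\Delta v>0$ in $\Om$. In either case $\operatorname{tr}\nabla^2 v\neq 0$, so $\nabla^2 v$ never vanishes and its rank is at least $1$ everywhere. The statement then reduces to showing that the set
\[
Z:=\{x\in\Om:\ \det\nabla^2 v(x)=0\}
\]
is either empty or all of $\Om$. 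The set $Z$ is relatively closed by continuity, and $\Om$ is connected, so it suffices to prove that $Z$ is open. Replacing $v$ by $-v$ in case \ref{negsemi_sub}, I would work throughout with a positive semidefinite Hessian. The function $\varphi:=\det\nabla^2 v\ge 0$ vanishes on $Z$, and the trace of $\nabla^2 v$ has a fixed strict sign.

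\textbf{Local computation.} I would follow Caffarelli--Friedman \cite{CAFFARELLI-FRIEDMAN} and Korevaar--Lewis \cite{KOREVAAR-LEWIS}. Fix $x_0\in Z$ and take a local orthonormal frame $\{e_1,e_2\}$ near $x_0$ with $\nabla^2 v(x_0)$ diagonal, $v_{11}(x_0)=0$ and $v_{22}(x_0)>0$; then $v_{22}\ge c>0$ in a small ball $B$ around $x_0$. Semidefiniteness together with $\varphi=v_{11}v_{22}-v_{12}^2$ gives $0\le v_{11}\le C\varphi$ and $v_{12}^2\le C\varphi$ in $B$. Differentiating $\varphi$ then expresses the third derivatives $v_{11k}$ as $O(\varphi+|\nabla\varphi|)$ in $B$. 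The goal is to show
\[
\Delta\varphi\le C\bigl(\varphi+|\nabla\varphi|\bigr)\quad\text{in } B,
\]
so that $\varphi\ge0$ is a supersolution of a linear elliptic inequality with bounded coefficients. Since $\varphi(x_0)=0$, the strong maximum principle (Hopf) then forces $\varphi\equiv0$ in $B$, which proves that $Z$ is open.

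\textbf{Deriving the differential inequality.} I would expand
\[
\Delta\varphi=v_{22}\Delta v_{11}+v_{11}\Delta v_{22}+2\langle\nabla v_{11},\nabla v_{22}\rangle-2v_{12}\Delta v_{12}-2|\nabla v_{12}|^2 .
\]
Modulo $O(\varphi+|\nabla\varphi|)$, only $v_{22}\Delta v_{11}-2v_{122}^2$ survives. To handle $\Delta v_{11}$ I would commute covariant derivatives on $\S^2$ using the Ricci identities with curvature $1$. This gives $\Delta v_{11}=(\Delta v)_{11}+2(v_{22}-v_{11})$ up to the precise sign convention, so the curvature contributes a multiple of $v_{22}$ whose sign must be tracked. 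Then I would differentiate \eqref{deltav_sub2} twice in the $e_1$ direction, writing $F(v,\nabla v)=-(A|\nabla v|^2+B)/v$ with $A=\frac{1+p}{1-p}$ and $B=\frac{1-p}{2}$. Using $v_{11},v_{12},v_{11k}=O(\varphi+|\nabla\varphi|)$ and $v_{122}=v_{221}+O(\varphi)$, the quantity $v_{22}\Delta v_{11}-2v_{122}^2$ reduces, modulo admissible errors, to a quadratic expression in $v_1$, $v_{122}$ and $v_{22}$. Its coefficients depend on $A$, $B$, $v$ and the curvature term.

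\textbf{Main obstacle.} The hard step is showing that this residual quadratic form is nonpositive, after the sign flip in case \ref{negsemi_sub} and after multiplying by a positive factor such as $v$. I would first try to complete the square in $v_{122}$. I would then use the equation evaluated at the point, $v_{22}=\Delta v+O(\varphi)=F+O(\varphi)$, to eliminate $v_{22}$ in favour of $|\nabla v|^2$ and $1/v$. Finally I would check the sign of the remaining coefficients of $v_1^2/v$ and of the curvature term. I expect the restriction $p\le3$ in case \ref{possemi_super} to appear exactly here: the spherical curvature term has a favourable sign only when it is dominated by the contribution of $B$ and $A$, which should translate into an inequality like $(p-1)\le 2$. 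If the direct square-completion does not close, my fallback would be to replace $\varphi$ by the smallest eigenvalue $\lambda_{\min}(\nabla^2 v)$ in the viscosity sense, as in Korevaar--Lewis \cite{KOREVAAR-LEWIS}. The same algebraic sign condition would then be needed, but the bookkeeping of error terms would be simpler.
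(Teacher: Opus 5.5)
Your overall architecture is the paper's:
\begin{itemize}
\item rank $\ge 1$ from the strict sign of $\Delta v$;
\item the function $\varphi=\det\nabla^2 v\ge 0$ (your remark that $\varphi\ge 0$ in both regimes is correct);
\item a differential inequality $\Delta\varphi\le C(\varphi+|\nabla\varphi|)$ near a rank-one point;
\item the strong minimum principle and an open--closed argument.
\end{itemize}
But the proposal stops exactly where the theorem lives. The sign of the residual expression is left as the ``main obstacle'' with a guess, and the set-up you give for it contains two errors that would make the computation come out wrong.

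First, with a frame fixed at $x_0$ it is false that $v_{11}\le C\varphi$ and $v_{12}^2\le C\varphi$ on $B$. If the null direction rotates while $\varphi\equiv 0$, then $v_{11}=v_{12}^2/v_{22}\ne 0$. You must instead pick, at every $x$, an orthonormal frame diagonalizing $\nabla^2 v(x)$, with constants uniform in $x$. The paper does this by moving each $x\in U$ to the equator by an isometry.

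Second, and decisively, on $\S^2$ the third-order Ricci identities carry first-order curvature terms. With $e_1$ the null direction, $v_{122}=v_{221}+v_1$ and $v_{121}=v_{112}+v_2$, not ``$+O(\varphi)$''. Moreover $\Delta v_{11}=(\Delta v)_{11}+2(v_{11}-v_{22})$; this fixes the sign you left open.

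Keep these terms and write $\Delta v=F=(a|\nabla v|^2+b)/v$ with $a=-\frac{1+p}{1-p}$ and $b=-\frac{1-p}{2}$. At $x$ one has
\[
v_{22}\sim F,\qquad v_{122}\sim v_1(1-F/v),\qquad v_{121}\sim v_2,\qquad (\Delta v)_{11}\sim \frac{2av_2^2}{v}+\frac{2Fv_1^2}{v^2}.
\]
Together these give
\[
\Delta\varphi\sim\frac{2F}{v}\Bigl[(2-a)\,v_1^2-b\Bigr]-2|\nabla v|^2,\qquad 2-a=\frac{3-p}{1-p},\quad -b=\frac{1-p}{2}.
\]
After discarding a nonpositive term, this is the paper's estimate $\Delta\varphi\lesssim v_{\theta\theta}\bigl[\frac{2}{v}\frac{3-p}{1-p}v_\phi^2+\frac{1-p}{v}\bigr]$, with $\phi$ the null direction.
\begin{itemize}
\item For $1<p\le 3$: $F>0$, $2-a\le 0$ and $-b<0$.
\item For $0\le p<1$: $F<0$, $2-a>0$ and $-b>0$.
\end{itemize}
In both cases $\Delta\varphi\lesssim 0$, while for $p>3$ the bracket can change sign.

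So $p\le 3$ enters through the cross term $4Fv_1^2/v$ created by $v_{122}=v_{221}+v_1$, competing with the $a$-part of $-2F^2$. It does not come from the zeroth-order curvature term alone, as you expected. If you drop the first-order terms as your outline does, everything collapses to $\pm 2F^2$ independently of $p$. That route therefore cannot detect the restriction and gives an invalid derivation. Your Korevaar--Lewis viscosity fallback would need this same corrected computation.
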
 

\begin{corollary} \label{corollary_constant_rank}
Let $\delta>0$ be sufficiently small and let $\Om_\delta := \{ x \in \Om : d(x, \partial \Om) < \delta \}.$ Under the hypotheses of Theorem \ref{MAIN}, suppose further that,
\begin{enumerate}[(i)]
\item for $0 \leq p < 1,$ $\nabla^2 v$ is negative definite in $\Om_\delta;$ \label{sub_constantrank}
\item for $1 < p \leq 3,$ $\nabla^2 v$ is positive definite in $\Om_\delta.$ \label{super_constantrank}
\end{enumerate}
Then \ref{sub_constantrank} $\nabla^2 v$ is negative definite in $\Om;$  \ref{super_constantrank} $\nabla^2 v$ is positive definite in $\Om.$\\
\end{corollary}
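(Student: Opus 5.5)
The plan is to deduce the corollary directly from Theorem \ref{MAIN}, combined with the connectedness of $\Om$.

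I treat case \ref{super_constantrank}; case \ref{sub_constantrank} is identical after replacing $v$ by $-v$ and reversing inequalities. The hypotheses of Theorem \ref{MAIN} include that $\nabla^2 v$ is positive semidefinite in all of $\Om$. Theorem \ref{MAIN} then gives that $\operatorname{rank}\nabla^2 v \equiv r$ is constant in $\Om$, with $r\in\{0,1,2\}$.

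Next I evaluate this constant rank inside the collar. For $\delta>0$ small, $\Om_\delta$ is a nonempty open subset of $\Om$. By assumption \ref{super_constantrank}, $\nabla^2 v$ is positive definite there, so its rank on $\Om_\delta$ is $2$. Since the rank is constant on $\Om$, we get $r=2$, and hence $\nabla^2 v$ is nondegenerate at every point of $\Om$.

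Finally, a positive semidefinite symmetric $2\times 2$ form with rank $2$ has both eigenvalues nonnegative and nonzero. It is therefore positive definite, and so $\nabla^2 v>0$ in all of $\Om$. No continuity argument beyond Theorem \ref{MAIN} is needed. The only point to check is that Theorem \ref{MAIN}'s conclusion is global on $\Om$, which uses the connectedness of the domain $\Om$ and the fact that $\Om_\delta\subset\Om$ is nonempty.

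I expect no real obstacle here, since all the difficulty is carried by Theorem \ref{MAIN}. The one thing to be careful about is that the rank is an intrinsic, frame-independent quantity, as noted in the preliminaries. This is what allows the local definiteness obtained in Fermi frames in Lemma \ref{lemma_boundary_rank} to be compared with the global constant rank.
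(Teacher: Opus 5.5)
Your proposal is correct and follows essentially the same route as the paper: Theorem \ref{MAIN} gives constant rank on the connected domain $\Om$, the definiteness in the collar $\Om_\delta$ forces that rank to be $2$, and semidefiniteness together with full rank gives definiteness. The only difference is that you state this last linear-algebra step explicitly, whereas the paper's two-line proof leaves it implicit.
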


\begin{remark}
Theorem \ref{MAIN} may be extended to monotone transformations $v=g(u)$ of positive solutions $u$ to a Dirichlet problem $\Delta u=f(u)$ in $\Om \subset \S^2,$ $u\big|_{\partial \Om}=0,$ for which $\Delta v = M(v)(a|\nabla v|^2 + b),$ where $M \in C^2(I),$ $M>0$ on $I \supset v(\Om ),$ and $a,b \in \R.$ Then $\nabla^2 v$ has constant rank in $\Om$ provided we assume that either
\begin{enumerate}
\item $\nabla^2 v$ is positive semidefinite in $\Om;$ $a,b >0;$ $\frac{1}{M}$ is convex in $I;$ and $2\frac{M'}{M} + aM \geq 0$ in $I.$
\item  $\nabla^2 v$ is negative semidefinite in $\Om;$ $a,b < 0,$ $\frac{1}{M}$ is concave in $I;$ and $2\frac{M'}{M} + aM \leq 0$ in $I.$
\end{enumerate}
Note Theorem \ref{MAIN} is recovered by taking $I=(0,+\infty),$ $M(s)=\frac{1}{s}>0$ in $I,$ $a=-\frac{1+p}{1-p}$ and $b=-\frac{1-p}{2}.$ The cases $0\leq p <1$ and $1<p\leq 3$ correspond respectively to the sets of conditions \textit{(1)} and \textit{(2)} above. The first Dirichlet eigenfunction $u_1$ also fits this framework: for $v=\log u_1,$ one has $I=(-\infty,0),$ $M \equiv 1,$ $a=-1,$ $b=-\lambda_1,$ hence the set \textit{(2)} of hypotheses hold.
\end{remark}

\smallskip

Since our argument is local, at each point $x \in \Om$ we may consider spherical coordinates $(\theta, \phi) \in (0, \pi) \times (0, 2 \pi)$ on a neighborhood $U$ of $x$ such that $U$ is away from the poles. In these coordinates, $g= d\theta^2 + \sin^2 \theta d\phi^2$ and $\{e_1, e_2\}= \{ \partial_\theta, \frac{\partial_\phi}{\sin \theta} \}$ is a well-defined smooth local orthonormal frame in $U.$ For each $x \in \Om,$ we denote by $H(x)$ the covariant Hessian matrix $(\nabla^2v)_{ij}(x)$ in this local orthonormal frame:  
\begin{equation} \label{hessian_matrix}
H(x) :=
\begin{pmatrix}
A & B \\
B & C
\end{pmatrix},
\end{equation}
with 
\begin{align}
\qquad \qquad \quad A &:= v_{\theta \theta} \big|_x, \\
\qquad \qquad \quad B &:= \displaystyle\frac{1}{\sin \theta} \left(v_{\theta \phi}\big|_x - \cot \theta v_\phi\big|_x \right), \\
\qquad \qquad \quad C &:=\displaystyle\frac{v_{\phi \phi}\big|_x}{\sin^2 \theta} + \cot \theta v_\theta\big|_x ,
\end{align}
where subscripts denote partial derivatives, e.g. $v_{\theta \theta} = \partial^2_{\theta \theta} v.$ \\

\begin{proof} [Proof of Theorem \ref{MAIN}]
Define the following auxiliary function:
\begin{align}
        \varphi(x) := \det H(x) = AC - B^2. \label{def_varphi}
\end{align} 

• $1 < p \leq 3$ \\
By hypothesis \ref{possemi_super}, $H(x)$ is positive semidefinite for every $x \in \Om$. From \eqref{deltav_sub2}, we see that  $\Delta v > 0$ in $\Om.$ Then for each $x \in \Om,$ $H(x)$ has eigenvalues $\mu_1, \mu_2$ that satisfy
\begin{align}
     \mu_1 + \mu_2 = \Delta v\big|_x &> 0, \label{S1}\\
     \mu_1  \mu_2 = \varphi\big|_x &\geq 0\label{S2}.
\end{align}

 By \eqref{S1}, it must be that $rank \, H(x) \geq 1$ $\forall x \in \Om.$ If there exists no point $x_0 \in \Om$ such that $rank \, H(x_0) = 1,$ then $rank \, H(x) = 2$ for every $x \in \Om,$ and there is nothing to prove. Let us then assume that $x_0 \in \Om$ is a point for which $rank \, H(x_0) =1,$ which implies that $\varphi(x_0)=0.$ We will prove that
\begin{equation}\label{varphi_0}
\varphi \equiv 0 \quad \text{in } \Om.\\
\end{equation}
Assume that $\mu_1 \neq 0$ and $\mu_2 = 0$ at the point $x_0,$ and pick some constant $\varepsilon>0$ so that $\mu_1$ is bounded from below by $2 \varepsilon$ at $x_0.$ Let then $U \Subset \Om$ be a small neighborhood of $x_0$ such that $\mu_1$ is bounded from below by $\varepsilon,$ and let $x \neq x_0$ be an arbitrary point in $U.$  We may also assume (up to an isometry) that $x$ lies on the equator $\theta = \pi/2$ and that the local orthonormal frame $\{\partial_\theta, \frac{\partial_\phi}{\sin \theta}\},$ which is well-defined in $U,$ diagonalizes $H(x).$ Thence at $x \in U$ we have
\begin{align}
    A &= v_{\theta \theta} \big|_x \geq \varepsilon, \label{v11_bound}\\
    B &= v_{\theta \phi}\big|_x = 0. \label{v21_zero} 
\end{align}

We proceed by following the next three steps. \\

\textit{Step 1.} $\Delta \varphi \lesssim 0 \quad  \text{in } U.$
\medskip

As in \cite{CAFFARELLI-FRIEDMAN}, if $h$ and $k$ are two functions defined in an open set $U \subset \Om,$ for any $x \in U$ we say that $h(x) \lesssim k(x)$ if there exist positive constants $c_1, c_2 > 0$ such that
\begin{equation} \label{relation_sim}
    (h - k)(x) \leq (c_1 |\nabla \varphi| + c_2 \varphi) (x),
\end{equation}
and we say that $h(x) \sim k(x)$ if both $h(x) \lesssim k(x)$ and $k(x) \lesssim h(x).$ We shall also write $h \lesssim k$ if the above inequality holds in $U,$ with the constants $c_1,c_2$ being independent of $x \in U.$ Moreover, we write $h \sim k$ if both $h \lesssim k$ and $k \lesssim h.$ 

In order to prove $\Delta \varphi \lesssim 0$ in $U$, we start by giving the following relations, which hold at the point $x \in U.$ From equation \eqref{v21_zero} we obtain 
 \begin{align}
     0 \sim \varphi(x) = AC-B^2 \sim AC \implies C \sim 0 \implies v_{\phi\phi}\big|_x \sim 0.\label{sim_phi}
 \end{align}
By \eqref{S2} and our hypothesis $\varphi(x_0)=0,$ we have that $\varphi_\theta(x) \sim 0$ and $\varphi_\phi(x) \sim 0.$ Then by differentiating \eqref{def_varphi} and then using \eqref{sim_phi}, we also obtain
\begin{align}
    \qquad &0 \sim \varphi_\theta(x) = A_\theta C + A C_\theta - 2 B B_\theta \; \, \sim A C_\theta \implies C_\theta \sim 0 \implies v_{\theta \phi \phi}\big|_x \sim v_\theta\big|_x , \label{sim_phi_t}  \\
     \qquad &0 \sim \varphi_\phi(x) = A_\phi C + A C_\phi - 2 B B_\phi \sim A C_\phi \implies C_\phi \sim 0 \implies v_{\phi\phi\phi}\big|_x \sim 0. \label{sim_phi_p}
\end{align}
In turn, from the previous relations we obtain the following at $x \in U:$ 
\begin{align}
\Delta \varphi (x) &= A \Delta C + C \Delta A + 2 \langle \nabla A, \nabla C \rangle - 2 |\nabla B|^2 - 2B \Delta B \\
&\sim A \Delta C - 2 |\nabla B|^2 \\
&=  A(C_{\theta \theta} + C_{\phi \phi}) - 2(B_\theta^2 +  B_\phi^2), \label{Deltaphi_1}
\end{align}
since $x$ lies on the equator $\theta=\pi/2$ and so $\sin \theta = 1,$ $\cos \theta = 0$ there. Moreover, we notice that from \eqref{sim_phi}--\eqref{sim_phi_p} we obtain the following at the point $x \in U:$
\begin{align} \label{BC_relations}
\qquad &C_{\theta \theta} = v_{\theta \theta \phi \phi}\big|_x + 2 v_{\phi \phi}\big|_x - 2 v_{\theta \theta}\big|_x, \qquad \; \; \;C_{\phi \phi} = v_{\phi \phi \phi \phi}\big|_x, \\
\qquad &B_{\theta} = v_{\theta \theta \phi}\big|_x + v_{\phi}\big|_x, \qquad \qquad  \qquad \qquad \;\,B_\phi = v_{\theta \phi \phi}\big|_x. 
\end{align}
We now substitute the relations \eqref{BC_relations} back into equation \eqref{Deltaphi_1}. Note that from \eqref{sim_phi}, at $x$ we have that $\Delta v (x) = v_{\phi \phi}\big|_x + v_{\theta \theta}\big|_x \sim v_{\theta\theta}\big|_x.$ We then have $(\Delta v)_{\phi}(x) = v_{\theta \theta \phi} + v_{\phi\phi\phi}\big|_x \sim v_{\theta \theta \phi}\big|_x$ (by \eqref{sim_phi_p}), and $(\Delta v)_{\phi \phi}(x) \sim v_{\theta \theta \phi \phi}(x) +  v_{\phi \phi \phi \phi}(x).$ Thus
\begin{align}
\Delta \varphi(x) &\sim v_{\theta \theta}  \Bigl[ v_{\theta \theta \phi \phi} + v_{\phi \phi\phi\phi}  +2 v_{\phi \phi} - 2 v_{\theta \theta} \Bigr] - 2 (v_{\theta \theta \phi} + v_\phi)^2 - 2 (v_{\theta \phi \phi} )^2 \\
&\sim v_{\theta \theta}   \Bigl[ (\Delta v)_{\phi \phi}    - 2 \Delta v    + 4 v_{\phi \phi}   \Bigr] - 2 \bigl( (\Delta v)_\phi  + v_\phi \bigr)^2 - 2 (v_{\theta \phi \phi} )^2. \label{Deltaphi_estimate_0}
\end{align} 
In addition, from \eqref{deltav_sub2} we have that $\Delta v = \frac{1}{v} \left( -\frac{1+p}{1-p} |\nabla v|^2 - \frac{1-p}{2}\right)$ at $x \in U$, from which we estimate $(\Delta v)_\phi,$ $(\Delta v)_\phi$ in as follows (here we shall denote $a:= -\frac{1+p}{1-p}$ and $b:=- \frac{1-p}{2}$ in order to simplify our computations). From the relations \eqref{v21_zero}, \eqref{sim_phi} and the fact that $x$ lies on the equator $\theta=\pi/2,$ at $x \in U$ there holds 
\begin{align} 
(\Delta v)_\phi &= 2a \frac{1}{v} v_\theta v_{\theta\phi} + 2a \frac{1}{v} v_\phi v_{\phi\phi} - \frac{1}{v^2}\bigl(a|\nabla v|^2+b\bigr)v_\phi \\
&\sim -\frac{1}{v^2}\bigl(a|\nabla v|^2+b\bigr)v_\phi \\
&= -\frac{1}{v} \Delta v \, v_\phi. \label{Deltav2}
\end{align}
Moreover, by the relations \eqref{v21_zero} and \eqref{sim_phi}--\eqref{sim_phi_p}, we obtain
\begin{align}
	 (\Delta v)_{\phi \phi} &= \frac{2a}{v} v_{\theta\phi}^2 +\frac{2aM}{\sin^2\theta}v_{\phi\phi}^2 + \frac{2a}{v} v_\theta v_{\theta\phi\phi} + \frac{2a}{v} \frac{1}{\sin^2\theta}v_\phi v_{\phi\phi\phi} -\frac{4a}{v^2}  v_\theta v_{\theta\phi}v_\phi \\
	 &\quad-\frac{4a}{v^2}\frac{1}{\sin^2\theta}v_\phi^2v_{\phi\phi} - \frac{1}{v^2}(a|\nabla v|^2+b)v_{\phi\phi} + \frac{2}{v^3}(a|\nabla v|^2+b)v_\phi^2 . \\
	 &\sim \frac{2}{v^3} \bigl(a|\nabla v|^2+b\bigr)v_\phi^2 + \frac{2a}{v} v_\theta^2.\\
	 &= \frac{2}{v^2} \Delta v \, v_\phi^2 + \frac{2a}{v} v_\theta^2. \label{Deltav22}
    \end{align}
After substituting \eqref{Deltav2} and \eqref{Deltav22} back into \eqref{Deltaphi_estimate_0}, we may estimate $\Delta \varphi(x)$ as follows. Below we use that $\Delta v(x) \sim v_{\theta\theta}\big|_x,$ $|\nabla v|^2\big|_x = v_\theta^2\big|_x + v_\phi^2\big|_x$ (since $\sin \theta=1$ at $x$) and the relation  \eqref{sim_phi_t}. Then at $x \in U,$ 
{\allowdisplaybreaks
\begin{align}
\Delta \varphi(x) &\sim v_{\theta \theta}   \Bigl[ (\Delta v)_{\phi \phi}    - 2 \Delta v    + 4 v_{\phi \phi}   \Bigr] - 2 \bigl( (\Delta v)_\phi  + v_\phi \bigr)^2 \frac{\Delta v}{\Delta v} - 2 v_{\theta}  ^2 \\
&\sim  v_{\theta \theta}  \Biggl[ (\Delta v)_{\phi \phi}  - 2 \frac{\bigl( (\Delta v)_\phi + v_\phi  \bigr)^2}{\Delta v}  - 2 \Delta v   \Biggr]- 2 v_{\theta}^2 \frac{\Delta v}{\Delta v}, \\
&\sim v_{\theta \theta} \Biggl[ \frac{2}{v^2} \Delta v \, v_{\phi}^2 + \frac{2a}{v} v_{\theta}^2 - \frac{2}{\Delta v} \left(-\frac{\Delta v}{v} + 1 \right)^2 v_\phi^2 - \frac{2a}{v} |\nabla v|^2 - \frac{2b}{v} \Biggr] - 2 v_{\theta}^2 \frac{v_{\theta\theta}}{\Delta v} \\
&\sim v_{\theta \theta} \Biggl[ \frac{2}{v^2} \Delta v \, v_{\phi}^2 - \frac{2a}{v} v_{\phi}^2 - \frac{2}{\Delta v} \left(\frac{(\Delta v)^2}{v^2} - 2 \frac{\Delta v}{v} + 1\right) v_\phi^2 - \frac{2b}{v} - \frac{2}{\Delta v}v_\theta^2 \Biggr] \\
&= v_{\theta \theta} \Biggl[ \frac{2}{v} (2-a) v_{\phi}^2 - \frac{2b}{v} - 2\frac{|\nabla v|^2}{\Delta v}v_\theta^2 \Biggr] \\
&\lesssim  v_{\theta \theta} \Biggl[ \frac{2}{v} (2-a) v_{\phi}^2 - \frac{2b}{v} \Biggr], \label{Deltaphi_estimate}
\end{align}
}
where we have discarded the term $- 2\frac{|\nabla v|^2}{\Delta v}v_\theta^2 $ since it is nonpositive. 

Finally, after substituting back for $a:= -\frac{1+p}{1-p}$ and $b:=- \frac{1-p}{2}$ in \eqref{Deltaphi_estimate}, we then use \eqref{v11_bound}, $v>0$ (since $u>0$), and the assumptions $\Delta v > 0$ and $1 <p \leq 3,$ to obtain that there exists some constant $c>0$ such that
\begin{align}
\Delta \varphi(x) &\lesssim v_{\theta \theta} \Biggl[ \frac{2}{v} \left( \frac{3-p}{1-p} \right) v_{\phi}^2 + \frac{1-p}{v}  \Biggr]  \leq - c \varepsilon  < 0 \quad \text{at } x \in U.
\label{Deltaphi_finalestimate}
\end{align}

Since we had chosen an arbitrary $x \neq x_0$ in the neighborhood $U \ni x_0,$ this proves $\Delta \varphi \lesssim 0$ in $U$. It is important to stress that for $p>3,$ we cannot ensure that \eqref{Deltaphi_finalestimate} holds; this is the only place where the hypothesis $1\leq p< 3$ enters the argument. \\

\textit{Step 2.} $\varphi \equiv 0$ in $U.$
\smallskip

By \textit{Step 1} and definition \eqref{relation_sim},  $\exists\, c_1, c_2>0,$ independent of $x,$ such that 
\begin{equation}
\Delta \varphi - c_1 |\nabla \varphi| - c_2 \varphi \leq 0 \quad \text{in } U. \label{Deltaphi_final}
\end{equation}
\smallskip
Moreover, observe that $\varphi$ is smooth in $U,$ whilst $\varphi \geq 0$ in $\overline U$ (by \eqref{S2}) and $\varphi(x_0) = 0$ (by our hypothesis that $H(x_0)$ has rank 1). By the strong minimum principle, we deduce that $\varphi \equiv 0$ in $U.$ \\

\textit{Step 3.} $\varphi \equiv 0$ in $\Om.$
\smallskip

For this we follow the argument in (Theorem 1, \cite{KOREVAAR-LEWIS}) and claim that
\begin{equation}
E = \{ x \in \Om \; \big| \; \rank H(x) = 1 \} \neq \emptyset  \quad \text{is open and closed in } \Om. \label{E_connect} 
\end{equation}
\smallskip
It is clear that $E$ is non-empty since $x_0 \in E$ (by hypothesis). Also, $E$ is open in $\Om$ since $\varphi \equiv 0$ in the neighborhood $U \Subset \Om$ (by our last argument). On the other hand, $E$ is closed in $\Om$ because the continuity of $\varphi$ implies that it must be that $\rank H(y_0) \leq 1$ for any boundary point $y_0 \in \partial E,$ and so we may repeat the above argument (with $y_0$ instead of $x_0$) to obtain that actually $\rank H = 1$ in a neighborhood of $y_0.$ The connectedness of $\Om \subset \S^2$ then implies that $E = \Om.$ This proves the claim \eqref{E_connect}, which proves \eqref{varphi_0}. Therefore, $rank \, H(x) = 1$ for every point $x \in \Om.$ 

\bigskip

• $0 \leq p <1.$\\
 By hypothesis \ref{negsemi_sub}, $H(x)$ is negative semidefinite $\forall x \in \Om.$ From equation \eqref{deltav_sub2}, $\Delta v < 0.$ Then for each $x \in \Om,$ $H(x)$ has eigenvalues $\mu_1, \mu_2$ satisfying 
\begin{align}
     \mu_1 + \mu_2 = \Delta v\big|_x &< 0, \label{S1_new}\\
     \mu_1  \mu_2 = \varphi\big|_x &\leq 0\label{S2_new}.
\end{align}
As in the proof of the case \ref{possemi_super}, we assume there exists $x_0 \in \Om$ such that $\rank H(x_0) = 1,$ which implies $\varphi(x_0)=0$ (or else the proof is done). Again we wish to prove that \eqref{varphi_0} holds. We do this by pushing through the steps of the previous case almost verbatim. The relevant changes are as follows. In the point $x \in U,$ where $U$ is a neighborhood of $x_0,$ we have $A= v_{\theta\theta} \big|_x \leq - \varepsilon < 0$ for some $\varepsilon >0$ (instead of \eqref{v11_bound}). Since $v>0$ and $0 \leq p <1,$ this yields the estimate
\begin{align}
\Delta \varphi(x) &\lesssim v_{\theta \theta} \Biggl[ \frac{2}{v} \left( \frac{3-p}{1-p} \right) v_{\phi}^2 + \frac{1-p}{v}  \Biggr]  \leq - \hat c \varepsilon  < 0 \quad \text{at } x \in U,
\label{Deltaphi_finalestimate2}
\end{align}
and so $\Delta \varphi \lesssim 0$ in $U.$  By setting $\psi:= -\varphi,$ this implies $\Delta \psi \gtrsim 0$ in $U.$ By \eqref{relation_sim}, this means there exist constants $\hat c_1, \hat c_2 >0$ such that
\begin{equation}
\Delta \psi - \hat{c_1} |\nabla \psi| - \hat{c_2} \psi \geq 0 \quad \text{in } U.
\end{equation}
By \eqref{S2_new}, note that $\psi\big|_x \geq 0.$ Also, $\psi(x_0)=0$ by our hypothesis. We may thus apply the strong maximum principle to obtain $\psi \equiv 0$ in $U,$ and hence $\varphi \equiv 0$ in $U$ as well. Thus the claim \eqref{E_connect} again follows. The rest of the argument goes unchanged. Therefore $\rank \,  H(x)=1$ for each $x \in \Om,$ which completes our proof.
\end{proof}

\medskip

\begin{proof}[Proof of Corollary \ref{corollary_constant_rank}]
Let $0 \leq p <1.$ By hypothesis \ref{sub_constantrank}, $\nabla^2 v$ is negative definite in $\Om_\delta,$ for some sufficiently small $\delta>0.$ By Theorem \ref{MAIN}, $\nabla^2 v$ has constant rank in $\Om.$ Then $\nabla^2 v$ is negative definite in $\Om.$ The case $1 < p \leq 3$ follows analogously.
\end{proof}

\bigskip

\section{Proof of Theorem \ref{power}} \label{section_proof_power}
We are finally in a position to put together the 4 ingredients needed to prove Theorem \ref{power}. We consider a uniformly convex domain $\Om \subset \S^2$ and use a continuation argument on the exponent $p$ to prove the cases \ref{sublinear} and \ref{superlinear}, recalling that the case \ref{eigenvalue} follows from \cite{LEE-WANG} (see Proposition \ref{log_eigen}). 
 
\begin{proof} [Proof of Theorem \ref{power}]
We start the proof by claiming that
\begin{align}
&v:= u_p^{\frac{1-p}{2}} \text{ is strictly concave for } p \in (1-\varepsilon, 1); \label{claim_sub} \\
&v:= u_p^{\frac{1-p}{2}} \text{ is strictly convex for }   p \in (1, 1+ \varepsilon). \label{claim_super}
\end{align}
From the case \ref{eigenvalue} in Theorem \ref{power}, $w:= -\log u_1$ is strictly convex in $\Om.$ By Corollary \ref{corollary_sub}, as $p=p_n \uparrow 1$ we have that $u_p/M_p \to u_1$ uniformly. Similarly, by Corollary \ref{corollary_super} we also have $u_p/M_p \to u_1$ uniformly as $p \downarrow 1.$  Thence $-\log u_p$ is strictly convex for each $p \in (1-\varepsilon, 1+\varepsilon),$ $\varepsilon > 0$ small. 
Let us now fix some $p<1.$ Set $w:= -\log u_p,$ $\alpha:=\frac{1-p}{2}>0,$ and write $v=e^{-\alpha w}.$ Therefore
$ \nabla^2 v = e^{-\alpha w}\bigl(\alpha^2\,\nabla w\otimes \nabla w-\alpha\,\nabla^2 w\bigr)$ and
\begin{equation}
\zeta^{T}\nabla^2 v\,\zeta = e^{-\alpha w}\Bigl(\alpha^2\langle \nabla w,\zeta\rangle^2-\alpha\,\zeta^T \nabla^2 w\,\zeta\Bigr) \quad \forall \,  \zeta\in T_x\Om .
\end{equation}
Since $w$ is strictly convex, we have $\zeta^{T}\nabla^2 w\,\zeta>0$ for every $\zeta\neq0$. Hence for $\alpha>0$ sufficiently small, i.e. for $p<1$ sufficiently close to $1$, the negative term dominates and $\zeta^{T}\nabla^2 v\,\zeta<0$ for every $\zeta\neq0.$ This shows \eqref{claim_sub}. The claim \eqref{claim_super} follows in analogous manner. 

So far we have shown that the conclusion of Theorem \ref{power} holds in a small interval centered at $p=1.$ If the conclusion fails, then this must happen for some $0 \leq p_* \leq 1-\varepsilon$ (sublinear case) or for some $1+\varepsilon \leq p^* \leq 3$ (superlinear case). We thus argue by contradiction and analyze the following two cases: \\ 

\textit{• The sublinear case, $0\leq p<1$} \hfill\\ 
Let us define the set $S := \{ p \in [0, 1) \,  \big| \,  u^{\frac{1-p}{2}} \text{ is not strictly concave in } \Om \}$ and set $p_* := \sup S$ and $v_*:= \displaystyle u_{p^*}^{\frac{1-p_*}{2}}.$ Clearly $0 \leq p_* \leq 1- \varepsilon$ and $\nabla^2 v_*$ is negative semidefinite in $\Om.$ Then there exists $x_0 \in \Om$ such that $rank\bigl( \nabla^2 v_*(x_0) \bigr) = 1.$ Moreover, we see that $v_*$ satisfies equation \eqref{deltav_sub2}, hence we are under the hypotheses of Theorem \ref{MAIN}. By Lemma \ref{lemma_boundary_rank}, $\nabla^2 v_*$ is negative definite in a tubular neighborhood of the boundary, $\Om_\delta =\{ x \in \Om : d(x, \partial \Om) < \delta \},$ for some $\delta > 0.$ Then by Corollary \ref{corollary_constant_rank}, $\nabla^2 v_*$ must be positive definite in $\Om.$ This gives a contradiction. It then follows that $S = \emptyset,$ and so $u^{\frac{1-p}{2}}$ is strictly concave in $\Om$ for each $0 \leq p < 1.$\\
\medskip

\textit{• The superlinear case, $1 < p\leq3$} \hfill\\ 
Let  $S := \{ p \in (1, 3) \,  \big| \,  u^{\frac{1-p}{2}} \text{ is not strictly convex in } \Om \}$ and set $p^* := \inf S$ and $v^*:= \displaystyle u_{p^*}^{\frac{1-p^*}{2}}.$ Then $1+\varepsilon \leq p^* \leq 3$ and  $\nabla^2 v_*$ is positive semidefinite in $\Om.$ Let $x_0 \in \Om$ be such that $rank\bigl( \nabla^2 v_*(x_0) \bigr) = 1.$ Since $v^*$ satisfies \eqref{deltav_sub2}, Theorem \ref{MAIN} may be applied. Moreover, Lemma \ref{lemma_boundary_rank} gives that $\nabla^2 v^*$ is positive definite in a tubular neighborhood $\Om_\delta$ of $\partial \Om$ for some $\delta>0.$ Thus by again invoking Corollary \ref{corollary_constant_rank}, we have that $\nabla^2 v^*$ is positive definite in $\Om,$ which contradicts our hypothesis that $rank \, \nabla^2 v_* (x_0) = 1.$ Hence $S = \emptyset$ and $u^{\frac{1-p}{2}}$ is strictly convex in $\Om$ for each $1 < p \leq3.$ This concludes the proof.  
 \end{proof}

\begin{proof}[Proof of Corollary \ref{corollary_power}.]
We treat the cases $0\leq p <1$ and $1<p\leq 3$ jointly. Recall that $v=u^{\frac{1-p}{2}},$ which is smooth in $\overline \Om$ since $u$ itself is smooth in $\overline \Om.$

We first prove that $u$ has a unique nondegenerate critical point. Clearly $u$ attains its maximum at some interior point of $\Om$ since $u\big|_{\partial \Om}=0,$ $u>0$ in $\Om$ and $\Om$ is compact. Then $u$ has at least one interior critical point. Also, the critical points of $u$ and $v:=u^{\frac{1-p}{2}}$ coincide since $\nabla v = \frac{1-p}{2} u^{\frac{-p-1}{2}} \nabla u$ and $p \in [0,1)\cup(1,3]$. Let $x_0 \neq x_1$ be two critical points of $v$. Connect them by a geodesic segment $\gamma,$ with $\gamma(0) = x_0, \gamma(1) = x_1.$ By the uniform convexity, $\gamma$ is entirely contained in $\Om.$ Define $f(t):=(v \circ \gamma)'(t) = \langle \nabla v\left( \gamma(t) \right), \gamma'(t) \rangle,$ and note $f'(0)=f'(1)=0.$ By Theorem \ref{power} \ref{sublinear}, $f$ is strictly concave on $[0,1]$ for $0 \leq p <1.$ Hence $f'$ is strictly decreasing and thus $f'(0) > f'(1),$ which is a contradiction. Similarly, by Theorem \ref{power} \ref{superlinear}, $f$ is strictly convex on $[0,1]$ for $1 < p \leq 3.$ Then $f'$ is strictly increasing and $f'(0)<  f'(1),$ again a contradiction. Therefore $v$ has a unique critical point $x_0.$  Now, because $\nabla u(x_0)=0,$ then $\nabla^2 v(x_0)=\frac{1-p}{2}u^{\frac{-p-1}{2}}(x_0) \nabla^2 u(x_0).$ By Theorem \eqref{power} \ref{sublinear} we have $\nabla^2 v(x_0)<0$ for $0\leq p <1,$ and thus $\nabla^2 u(x_0) <0.$
On the other hand,  Theorem \eqref{power} \ref{superlinear} gives that $\nabla^2 v(x_0)<0$ for $0< p \leq 3,$ and thus $\nabla^2 u(x_0) >0.$ In both cases, the unique critical point $x_0$ is nondegenerate. \\

We now prove that $u$ has strictly convex superlevel sets for $0 \leq p \leq 3.$ Let $x_0$ be the unique maximum point of $v.$ Then each $c < v(x_0)$ is a regular value of $v$ and $\partial\Sigma^c=\{x\in\Om  \, \big| \, v(x)=c\}$ is a smooth curve. Fix a $p \in \partial \Sigma^c$ and let $\nu(p) := -\frac{\nabla v(p)}{|\nabla v(p)|}$ be the outward unit normal at $p$ and $\tau(p)$ be a unit tangent vector at $p.$ The geodesic curvature at $p$ of $\partial \Sigma^c$ with respect to $\nu$ writes as $\kappa_g(p) = - \frac{\nabla^2 v (\tau, \tau) (p)}{|\nabla v(p)|}.$ 

\smallskip

• $0 \leq p \leq 1$ \\
By Theorem \ref{power} \ref{sublinear}, $v$ is strictly concave in $\Om$ and thus $\nabla^2 v (\tau, \tau) (p) < 0$ in $\Om.$ By the compactness of $\partial  \Sigma^c,$ there exists $\lambda_0 > 0$ such that $\kappa_g \geq \frac{\lambda_0}{\sup_{\Sigma^c} |\nabla v|} > 0$ on $\partial \Sigma^c.$ Thus $\partial\Sigma^c$ has strictly positive geodesic curvature with respect to $\nu$ and the superlevel set $\Sigma^c := \{x \in \Om   \setminus \{ x_0\} \; \big| \; v(x) \geq c\}$ is strictly convex. Set $a := g^{-1}(c).$ Since the map $s \mapsto s^{\frac{1-p}{1}},$ $s>0,$ is strictly increasing for $0 \leq p <1,$ then the superlevel set $\{ u(x) \geq a \}$ coincides with $\Sigma^c.$ By repeating the argument to each regular value $c$ of $v$ there follows the strict convexity of the superlevel sets of $u$. 

\smallskip

• $1 \leq p \leq 3$ \\
By Theorem \ref{power} \ref{superlinear}, $v$ is strictly convex in $\Om,$ hence $\nabla^2 v (\tau, \tau) (p) > 0$ in $\Om.$ Thus the sublevel set $\Sigma_c := \{x \in \Om  \setminus \{ x_0\} \, \big| \, v(x) \leq c\}$ is strictly convex, and the superlevel sets of $u$ coincide with the sublevel sets $\Sigma_c$ of $v$ (since now $s \mapsto s^{\frac{1-p}{2}},$ $s>0,$ is strictly decreasing for $1<p\leq 3$). Again by repeating the argument to each regular value $c$ of $v$, the strict convexity of the superlevel sets of $u$ follows.
\end{proof}

\bigskip

\section{Generalizations and Further Directions} \label{section_generalization}

In this section we address some natural directions for future research.

\subsection{Relaxation of the uniform convexity hypothesis}
The conclusions in Theorem \ref{power} and Corollary \ref{corollary_power} still hold if the \emph{uniform convexity} assumption on the smooth domain $\Om \subset \S^2$ is weakened to a \emph{convexity} one. This can be handled by an approximation argument. As explained in Section \ref{section_preliminaries}, the smooth convex domain $\Om \subset \S^2$ may be identified with a bounded, smooth convex domain $\widetilde \Om \subset \R^2.$ A standard argument allows one to construct a sequence of smooth uniformly convex domains $(\Om_t)_{t \in [0,1)} \subset \S^2$ with $\Om _t \subset \Om _{s}$ whenever $0 \leq t < s <1,$ $\bigcup_{t \in [0,1)} \Om_t=\Om$ and $\partial \Om_t \to \partial \Om$ as $t \uparrow 1.$ 

For $0 \leq p <1,$ let us recall that in Theorem \ref{theorem_uniqueness} we obtained that problem \eqref{BVP0} admits a unique positive solution under a convexity assumption on the domain. This allows one to adapt the argument of Sakaguchi (p. 410, \cite{SAKAGUCHI}), together with the deformation method of Caffarelli--Friedman \cite{CAFFARELLI-FRIEDMAN} and Korevaar--Lewis \cite{KOREVAAR-LEWIS}. Let $\varphi_t \in H_0^1(\Om_t)$ be a minimizer of 
\begin{equation} \label{variational_problem}
m_p(\Om_t):= \inf \left\{ \int_{\Om_t} |\nabla \psi|^2 dV_g \; \bigg| \;  \psi \in H_0^1(\Om_t), \, \psi \geq 0, \int_{\Om_t}  \psi^{p+1} dV_g = 1 \right\}.
\end{equation}
By a standard variational argument, one can show that $m_p(\Om_t) \to m_p(\Om_s)$ and $\varphi_t \to \varphi_s$ locally unfirmly in $\Om_s$ as $t \uparrow s,$ where $\varphi_s$ minimizes $m_p(\Om_s).$ Then setting $u_t:=m_p(\Om_t)^{\frac{1}{p-1}}\varphi_t,$ $u_s:= m_p(\Om_s)^{\frac{1}{p-1}}\varphi_s,$ gives that $u_t, u_s$ are respectively the unique positive solutions of \eqref{BVP0} in $\Om_t$ and $\Om_s,$ and $u_t \to u_s$ locally uniformly in $\Om_s$ as $t \uparrow s.$ By standard elliptic estimates, this improves to $u_t \to u_s$ in $C^\infty_{loc}(\Om_s).$ By setting $v_t:= u_t^{\frac{1-p}{2}},$ $v_s:= u_s^{\frac{1-p}{2}},$ one then obtains $v_t \to v_s$ in $C^\infty_{loc}(\Om_s).$ Let 
\begin{equation}
S:= \{ s \in [0,1]: \nabla^2 v_t \text{ is negative definite in } \Om_t \text{ for every } 0 \leq t < s \}.
\end{equation}
By Theorem \ref{power} \ref{sublinear}, $[0,1) \subset S.$ The local $C^\infty$ convergence in $\Om$ gives the negative semidefiniteness of $\nabla^2 v$ in $\Om,$ and a contradiction argument based on Theorem \ref{MAIN} then forces $1 \in S.$ Thus $v= u^{\frac{1-p}{2}}$ is strictly concave in the convex domain $\Om \subset \S^2.$

For $1<p\le3$, the approximation argument is more delicate. Since positive solutions of \eqref{BVP0} are not known to be unique in general, the argument above, which identifies the limit of the variational problem via the uniqueness of the solution, is not directly applicable. A different approach to the treatment of this case is currently being investigated by the authors.

\subsection{Generalization to $\S^n$}
A natural extension of the work here presented is the extension of Theorem \ref{power} and Corollary \ref{corollary_power} to uniformly convex domains $\Om \subset \S^n, n \geq 3.$  As in the two-dimensional case, the strategy here relies on the ingredients \ref{ing1}--\ref{ing4} described in the Introduction. 

The main difficulty appears in \ref{ing2}. For $n \geq 2,$ the identification (via an isometry) of a convex domain $\Om \subset \S^n$ with a bounded convex domain of $\widetilde \Om \subset \R^n$ endowed with a conformal metric is still possible. By the formula for the Laplace-Beltrami operator on $\S^n$ under a conformal change of metric $\tilde g = \rho^2 g,$ the equation $-\Delta_g u = u^p$ in $\Om \subset \S^n$ transforms into a more general Schrödinger equation $-\Delta_{\tilde g} u + V u= \rho^2 u^p$ in $\widetilde \Om \subset \R^n,$ where $V = \frac{n-2}{2\rho} \left( \Delta_g \rho + \frac{n-4}{2 \rho} |\nabla \rho|^2 \right).$ In the case $n=2,$ the potential $V \equiv 0$ and the equation rewrites as \eqref{eq_stereo}. Thus for $n \geq 3$ one could either adapt the uniqueness argument for $p$ sufficiently close to 1 done in Section \ref{section_uniqueness} for this more general operator (on $\R^n$), or by working directly on $\S^n.$ Moreover, note that \ref{ing1} is readily available since Proposition \ref{log_eigen} holds on $\S^n.$ As for \ref{ing3} and \ref{ing4}, they can be extended to $\S^n$ without substantial obstructions.

\bibliographystyle{abbrv}
\bibliography{GPR} 

\end{document}